\documentclass[11pt]{amsart}
\usepackage{amsmath,amssymb,epsfig,color, amsthm, mathrsfs}
\setlength{\topmargin}{-1.5cm}
\setlength{\oddsidemargin}{-0.5cm}
\setlength{\evensidemargin}{ -0.5cm}
\textwidth 17cm
\textheight 22.4cm

\newtheorem{theorem}{Theorem}[section]

\newtheorem{corollary}[theorem]{Corollary}
\newtheorem{definition}[theorem]{Definition}
\newtheorem{lemma}[theorem]{Lemma}
\newtheorem{proposition}[theorem]{Proposition}
\newtheorem{remark}[theorem]{Remark}

\newcommand{\vanish}[1]{}\parskip=12pt

\def\p{\prime}

\def\R{\mathcal{R}}
\def\T{\mathcal{T}}

\def\C{\mathcal{C}}
\def\D{\mathcal{D}}

\def\H{\mathcal{H}}
\newcommand{\VPE}{\operatorname{VP}}
\numberwithin{equation}{section}

\begin{document}

\title[Relative Tutte Polynomials of Tensor Products]
{Relative Tutte polynomials of tensor products of colored graphs}
\author{Y. Diao and G. Hetyei}
\address{Department of Mathematics and Statistics, UNC Charlotte,
    Charlotte, NC 28223}
\email{ydiao@uncc.edu, ghetyei@uncc.edu}
\subjclass{Primary: 05C35; Secondary: 05C15, 57M25}
\keywords{Tutte polynomials, colored graphs, tensor product of graphs, relative Tutte polynomial}
\date{\today}

\begin{abstract}
The tensor product $(G_1,G_2)$ of a graph $G_1$ and a pointed graph
$G_2$ (containing one distinguished edge) is obtained by identifying
each edge of $G_1$ with the distinguished edge of a separate copy of
$G_2$, and then removing the identified edges. A formula to compute the
Tutte polynomial of a tensor product of graphs was originally given by
Brylawski. This formula was recently generalized to colored graphs and the
generalized Tutte polynomial introduced by Bollob\'as and Riordan. In
this paper we generalize the colored tensor product formula to relative
Tutte polynomials of relative graphs, containing zero edges to which
the usual deletion-contraction rules do not apply. As we have shown in a
recent paper, relative Tutte polynomials may be used to compute the
Jones polynomial of a virtual knot.
\end{abstract}

\maketitle
\section{Introduction}\label{s1}

The Tutte polynomial is one of the most important invariants in graph
theory. It was first introduced and studied by Tutte for non-colored
graphs, but has since been generalized to colored graphs \cite{BR} and
to colored relative graphs in which some edges cannot be treated as
regular colored edges in the computation of the Tutte polynomial
\cite{DHRel}. The corresponding Tutte polynomial in the latter case is
called the relative Tutte polynomial.

There are many situations in applied graph theory where an actual network
is represented by a graph, whose edges turn out to denote subnetworks at
closer inspection. A typical example is an electric circuit whose
components are (identical) integrated circuits themselves. Theoretically,
we may represent many such networks of subnetworks by using the {\em
tensor product operation} of graphs. The tensor product operation associates
a graph $G_1\otimes G_2$ to a graph $G_1$ and a pointed graph $G_2$
containing one distinguished edge $e$. It is obtained by replacing
each edge of $G_1$ with a copy of $G_2\setminus e$, where $e$ is a
used to mark the vertices of $G_2$ where we graft $G_2$ to the place of
the removed edge of
$G_1$ and is itself removed in the process. The Tutte polynomial of such a
tensor product of graphs was first expressed by Brylawski~\cite{B}. He
showed that the Tutte polynomial of a tensor product can be obtained
from the Tutte polynomial $G_1$ and some ordinary and pointed Tutte
polynomials associated to $(G_2,e)$ through certain variable substitutions.
The application of the initial tensor product is limited by the fact
that its definition
requires all edges to be replaced by the same graph. However, in the network
setting, the components of an actual network may be integrated circuits of different kinds. Such a composite network cannot be obtained by replacing every connection with the same subnetwork (as required by the tensor product definition). Thus it is more practical and applicable to color the edges (links) of $G_1$
(a network) first and then replace only edges of a fixed color
with the same graphs (subnetworks) $G_2$ ($G_2\setminus e$ to be more precise). Repeating this
operation would then allow replacing individual edges by different
graphs. This new tensor product concept was introduced in \cite{DHCl} where it was shown that the results of
Brylawski \cite{B} on the Tutte polynomial of a tensor
product of non-colored graphs can be generalized to the (generalized)
Tutte polynomial of a (generalized) tensor product of colored
graphs \cite{DHCl}.

Another application of the colored Tutte polynomial is in the area of knot theory. It is well-known that the Jones polynomial of a
link can be computed from the Kauffman bracket polynomial while the Kauffman bracket polynomial of a link can be computed from the
(signed) Tutte polynomial of the face graph of a regular
projection of the link. This was first shown for alternating links and
the ordinary Tutte polynomial by Thistlethwaite~\cite{T0}, then
generalized to arbitrary links and a signed Tutte polynomial by
Kauffman~\cite{K2}. This enables applications of the
ordinary Tutte polynomials and their signed generalizations to
classical knot theory such as those in \cite{DEZ,DGH1,Ja}.
For virtual knots the situation is a little more complicated.
An appropriate generalization of the Kauffman bracket polynomial
was developed by Kauffman himself~\cite{K3}. However, until very
recently, no appropriate generalization of the Tutte polynomial to face
graphs of virtual links was known. In a series of papers, Chmutov,
Pak and Voltz \cite{Ch,CP,CV} developed a generalization of
Thistlethwaite's theorem first to checkerboard-colorable \cite{CP} then
to arbitrary \cite{Ch,CV} virtual link diagrams. These express the
Jones polynomial of a virtual link in terms of a signed generalization
of the Bollob\'as-Riordan polynomial~\cite{BR2,BR3} of a ribbon graph,
obtained from the virtual link diagram. In \cite{DHRel}, it is shown that
a relative variant of the {\em other} generalization of the Tutte
polynomial, also due to Bollob\'as and Riordan~\cite{BR} may also be
used to compute the Jones polynomial of a virtual link, this time
directly from the face graph of the virtual link diagram. In a face graph of a virtual link diagram, edges corresponding to virtual crossings cannot be treated as a regular edge and are called {\em zero edges} in \cite{DHRel}. The Tutte polynomial of a colored graph with zero edges generalized in \cite{DHRel} is called a {\em relative Tutte polynomial}.

Given a colored graph $G_1$ and a pointed colored graph $G_2$ such that
both may contain zero edges, their tensor product can be defined just as
in the case of two colored graphs, so long as the edges in $G_1$ to be
replaced by copies of $G_2$ are not zero edges and the distinguished
edge marked in $G_2$ for the gluing purpose is not a zero edge
either. The main goal of this paper is to formulate the relative Tutte
polynomial of the tensor product of two colored graphs with zero edges,
using only the Tutte polynomials obtained from $G_1$ and $G_2$ and
certain substitution rules. As it turned out, we have to generalize the
pointed polynomials used in the colored tensor product case, define new
pointed polynomials and introduce a set of much more complicated
substitution rules. Given the complexity level of the relative Tutte
polynomial, this should not be a surprise. It is actually somewhat
surprising that such a formulation still exists!

This paper is organized as follows.
In Section~\ref{s2} we review the relative Tutte polynomial and, in Section~\ref{s21} we introduce the concept of the universal relative Tutte polynomial. Section \ref{s3} contains the definition of our pointed universal relative Tutte
polynomials. These include the ones generalized from the pointed Tutte polynomials used previously and three new pointed Tutte polynomials. In Section \ref{sec:cs} we discuss the contracting sets in a tensor product of colored graphs with zero edges. Section \ref{sec:tp} contains our main result: the generalization of the tensor product formula to colored relative graphs. The concluding Section \ref{conc} contains a sample application of our main result and a few further remarks.

\section{A review of the relative Tutte polynomial}\label{s2}

In this section we review the notion of the relative Tutte polynomial of a
colored graph $G$, with respect to a set of edges $\H\subset
E(G)$ introduced in~\cite{DHRel}. We observe that the
results in~\cite{DHRel} may be easily generalized to the situation where the
edges in $\H$ do not all belong to the same color set. We also introduce
 the {\em universal relative Tutte polynomial} of a colored graph.

\begin{definition}\label{TD1}
Let $G$ be a graph with edge set $E(G)$ and let  $\H\subseteq E(G)$. A
subset $\C$ of $E(G)\setminus \H$ is called a
{\em contracting set} of $G$ with respect to $\H$ if $\C$ contains no
cycles and $E(G)\setminus(\C\cup \H)$ contains no cocycles. Given a
contracting set $\C$, the set $E(G)\setminus(\C\cup \H)$ is called the
corresponding {\em deleting set} and it is denoted by $\D$.
\end{definition}

Recall that the cocycles of a graph are its minimal sets of edges whose
removal increases the number of connected components. Sometimes we will
refer to $\C$, $\D$ and $\H$ as {\em graphs}, by which we mean the
subgraphs of $G$ induced by the respective set of edges.

\begin{definition}{\em
Let $G$ be a graph and $\H$ be a subset of $E(G)$. A {\em proper
  labeling} or {\em relative labeling} of the edges of $G$ with respect
to $\H$ is a map $\phi:\ E(G)\longrightarrow \mathbb{N}$ such that
$\H=\{e\in E(G)\::\: \phi(e)=0\}$ and the restriction of $\phi$
to $E(G)\setminus \H$ is an injective map into $\mathbb{Z}_+$.  We
say that $e_1$ is larger than $e_2$ if $\phi(e_1)>\phi(e_2)$. Let $\C$
be a contracting set of $G$ with respect to $\H$, then
\begin{itemize}
\item[a)] an edge $e\in \C$ is called {\em internally active} if $\D\cup \{e\}$
contains a cocycle $D_0$ in which $e$ is the smallest edge,
otherwise it is {\em internally inactive}.
\item[b)] an edge $f\in \D$ is called {\em externally active} if $\C\cup \{f\}$
contains a cycle $C_0$ in which $f$ is the smallest edge,
otherwise it is {\em externally inactive}.
\end{itemize}
}\end{definition}

As noted in~\cite[Remark 3.12]{DHRel}, activities of regular edges may be given in the following
equivalent definition.

\begin{definition}\label{acti_alt_def}{\em
Let $G$ be a graph and $\H$ be a subset of $E(G)$ and that a proper
  labeling $\phi$ has been given. Let $\C$
be a contracting set of $G$ with respect to $\H$, then
\begin{itemize}
\item[a)] an edge $e\in \C$ is internally
active if it becomes a bridge once all edges in $\D$
larger than $e$ are deleted, otherwise it is internally inactive;
\item[b)] an edge $e\in \D$ is externally active if it becomes a
loop after all edges in $\C$ larger than $f$ are contracted,
otherwise it is externally inactive.
\end{itemize}
}\end{definition}
The above equivalent definition depends of the following description of
contracting and deleting sets.
\begin{lemma}
\label{l:descend}
Let $G$ be a graph, let $\H$ be a subset of $E(G)$, and let $\phi$ be a proper
labeling. Let $\C\subseteq E(G)\setminus \H$ be a set of regular edges
and let $\D=E(G)\setminus (\C\cup \H)$. Then $\C$ is a contracting set
and $\D$ is the corresponding deleting set if and only if the following
holds for regular edge $e\in E(G)\setminus \H$ after contracting all
  edges in $f\in \C$ and all deleting edges $g\in\D$ satisfying
  $\phi(f)\geq \phi(e)$  and $\phi(g)\geq \phi(e)$:
\begin{enumerate}
\item If $e\in \C$ then $e$ does not become a loop;
\item if $e\in \D$ then $e$ does not become a bridge.
\end{enumerate}
\end{lemma}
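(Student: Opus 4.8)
The plan is to first restate the hypothesis in the most concrete form. By Definition~\ref{TD1}, ``$\C$ is a contracting set with deleting set $\D$'' says exactly that $\C$ is a forest and that $\D$ contains no nonempty edge cut of $G$ --- equivalently, that the spanning subgraph with edge set $\C\cup\H$ has the same connected components as $G$; the passage between ``$\D$ contains no cocycle'' and ``$\D$ contains no nonempty cut'' is the standard fact that every nonempty edge cut contains a bond. For a regular edge $e$, write $\C_{>e}$ and $\D_{>e}$ for the sets of edges of $\C$, resp.\ $\D$, whose $\phi$-label exceeds $\phi(e)$, so that the minor in the statement is $G':=G/\C_{>e}\setminus\D_{>e}$, in which $e$ itself still survives. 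The one technical device I would use repeatedly is the dictionary between $G'$ and $G$: since the edges of $G'$ are exactly the edges of $G$ lying outside $\C_{>e}\cup\D_{>e}$ and a bipartition of $V(G')$ is the same thing as a bipartition of $V(G)$ crossed by no edge of $\C_{>e}$, one gets that $e$ is a loop in $G'$ iff the two endpoints of $e$ are joined by a path using only edges of $\C_{>e}$, and $e$ is a bridge in $G'$ iff there is a bipartition $V(G)=V_1\sqcup V_2$ crossed by $e$ such that every edge of $G$ crossing it lies in $\D_{>e}\cup\{e\}$.

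With this dictionary both directions are short. For ``$\Rightarrow$'', assume $\C$ is a contracting set. If some $e\in\C$ were a loop in $G'$, a $\C_{>e}$-path joining its endpoints together with $e$ would be a cycle contained in $\C_{>e}\cup\{e\}\subseteq\C$, contradicting that $\C$ is a forest (the deleted edges play no role here). If some $e\in\D$ were a bridge in $G'$, the edge set $B$ of a witnessing bipartition is a nonempty cut of $G$ with $B\subseteq\D_{>e}\cup\{e\}\subseteq\D$, so $\D$ would contain a bond, a contradiction. For ``$\Leftarrow$'', I would argue contrapositively. If $\C$ contains a cycle $C_0$, let $e$ be its $\phi$-smallest edge; then $C_0\setminus\{e\}\subseteq\C_{>e}$, so contracting $\C_{>e}$ collapses $C_0$ and turns $e$ into a loop of $G'$, violating~(1). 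If $\D$ contains a bond $B_0$, let $e$ be its $\phi$-smallest edge and let $V_1\sqcup V_2$ be the vertex bipartition of $G$ cut out by $B_0$; every edge crossing it lies in $B_0\subseteq\D$, so no edge of $\C_{>e}$ crosses it, while among the crossing edges only $e$ fails to lie in $\D_{>e}$ (the others, being larger than $e$ and in $\D$, lie in $\D_{>e}$). Thus the dictionary makes $e$ a bridge of $G'$, violating~(2).

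The step I expect to demand the most care is precisely this minor bookkeeping. The tempting one-liners --- ``deleting $B_0\setminus\{e\}$ makes $e$ a bridge and contracting $\C_{>e}$ preserves this'' --- are not valid as stated, because contracting edges can create loops and so destroy bridges; one really has to transport the separating vertex bipartition back and forth between $G$ and $G'$, and check along the way that the edges of $\H$ (which are neither contracted nor deleted and which carry the smallest labels) cause no trouble, and that performing all the contractions and deletions simultaneously --- rather than one at a time as in Definition~\ref{acti_alt_def} --- is harmless. Once the dictionary of the first paragraph is in place, everything else is routine.
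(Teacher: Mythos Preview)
Your proof is correct. The paper itself declares the lemma ``straightforward and left to the reader,'' so there is no authors' argument to compare against; your write-up supplies precisely the kind of verification the paper omits, and your ``dictionary'' translating loop/bridge conditions in the minor $G/\C_{>e}\setminus\D_{>e}$ back to path/cut conditions in $G$ is the right tool for making both directions clean.
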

The proof is straightforward and left to the reader. As a consequence of
Lemma~\ref{l:descend} we may find each contracting set $\C$, together
with the corresponding deleting set $\D$ by going through the list of
regular edges in the order of their labels and deciding to put each of
them either into $\C$ or into $\D$, contracting or deleting them
accordingly, subject only to the rules that we are not allowed to
contract a loop or delete a bridge.

The definition of a relative Tutte polynomial involves
contracting all edges in $\C$ and deleting all edges in $\D$. We perform
these operations in decreasing order of the labels. The resulting
graph $\H_\C$ contains only zero edges and will be replaced with a graph
invariant $\psi(\H_\C)$. The graph $\H_\C$ depends on the order of
deletions and contractions determined by
the proper labeling $\phi$. However, the multiset of
blocks of $\H_\C$ is independent of the order in which the
deletions and contractions are performed, see~\cite[Lemma 3.14]{DHRel}.
That's why we want the operator $\psi$ to be a {\em block invariant}
(see~\cite[Definition 3.13]{DHRel}), most of the times.
For applications in knot theory a generalization of
block invariants was introduced in \cite{DHRel}: maps on isomorphism
classes on graphs that are invariant under {\em vertex pivots}. These
operations are defined as sequences of {\em vertex splicings} and
{\em vertex splittings}. A {\em vertex splicing} is an
operation that merges two disjoint graphs by picking a vertex from each
and identifying these selected vertices, thus creating a cutpoint.
The opposite operation is {\em vertex splitting} that creates two disjoint
graphs by replacing a cutpoint $v$ with two copies $v_1$ and $v_2$, and
makes each block containing $v$ contain exactly one of $v_1$ and $v_2$.
\begin{definition}
{\em Let $G$ be a graph that has a cutpoint $u$. A {\em vertex pivot} is
  a sequence of vertex splittings and vertex splicings, of the following
  kind. First
  we split $G$ by creating two copies of $u$ and two disjoint graphs
  $G_1$ and $G_2$. Then we take a vertex $v_1\in V(G_1)$ from the
  connected component of $u_1$ and a vertex $v_2\in V(G_2)$ in the
  connected component of $u_2$ and we merge $G_1$ and $G_2$ by
  identifying $u_1$ with $u_2$.
}
\end{definition}
As noted in ~\cite[Section 4]{DHRel}, $\H_\C$ will be the
same up to performing a sequence of vertex pivots, independently of $\phi$.

Let $G$ be a graph and $\H\subseteq E(G)$. In~\cite{DHRel} a coloring
$c: E(G)\setminus \H\rightarrow \Lambda $ of the regular edges to a color set
$\Lambda$ was considered. However, the definitions and results stated
in~\cite{DHRel} may be generalized without any substantial change
to the situation where we color all edges of $G$, including the zero
edges, using a map $c: E(G)\rightarrow \Lambda$. Let us call a graph
$G$, together with such a coloring $c: E(G)\rightarrow \Lambda$ a {\em
  $\Lambda$-colored graph}. We may fix a subset
$\Lambda_0\subseteq \Lambda$ and require all edges in $\H$ to be
with colors from $\Lambda_0$.  The subgraph $\H$ is thus also a
$\Lambda_0$-colored graph.

\begin{definition}
{\em We call two $\Lambda_0$-colored graphs $\Gamma$ and $\Gamma'$ {\em
    vertex pivot equivalent} if $\Gamma'$ is isomorphic to a graph
  obtained from $\Gamma$ by performing a sequence of vertex pivot operations.
We call an invariant $\psi$ of $\Lambda_0$-colored graphs a {\em
    vertex pivot invariant} if $\psi(\Gamma)=\psi(\Gamma')$ whenever
$\Gamma$ and $\Gamma'$ are vertex pivot equivalent. The
collection of vertex pivot equivalence classes of $\Lambda_0$-colored
graphs is denoted by $\VPE(\Lambda_0)$.
}
\end{definition}

For any contracting set  $\C$ of $G$ with
respect to $\H$, let $\H_\C$ be the graph obtained by deleting all
edges in $\D$ and contracting all edges in $\C$. Finally, we assign a
proper labeling $\phi$ to the edges of $G$. We now define the relative
Tutte polynomial of $G$ with respect to $\H$ and $\psi$ as
\begin{equation}\label{eq1}
T_\H^\psi(G)=\sum_{\C} \left(\prod_{e\in G\setminus
\H}w(G,c,\phi,\C,e)\right)\psi(\H_\C)\in \R[x_\lambda, X_{\lambda},
  y_{\lambda}, Y_{\lambda} \::\: \lambda\in \Lambda],
\end{equation}
where the summation is taken over all contracting sets $\C$
and $w(G,c,\phi,\C,e)$  is the {\em weight} of the edge $e$ with
respect to the contracting set $\C$, which is defined as
(assume that $e$ has color $\lambda$):

\begin{equation}
w(G,c,\phi,\C,e)=\left\{
\begin{array}{ll}
X_\lambda &\ {\rm if}\ e\ {\rm is\ internally\ active};\\
Y_\lambda &\ {\rm if}\ e\ {\rm is\ externally\ active};\\
x_\lambda &\ {\rm if}\ e\ {\rm is\ internally\ inactive};\\
y_\lambda &\ {\rm if}\ e\ {\rm is\ externally\ inactive}.
\end{array}
\right.
\end{equation}

To simplify our notation, we may use $T_\H(G)$ for
$T_\H^\psi(G)$, with the understanding that some $\psi$ has been
chosen, unless there is a need to stress what $\psi$ really is.
Following \cite{DHRel}, we then write
$$
W(G,c,\phi,\C)=\prod_{e\in G\setminus \H}w(G,c,\phi,\C,e)
$$
so that
\begin{equation}\label{eq2}
T_\H(G,\phi)=\sum_{\C} W(G,c,\phi,\C)\psi(\H_\C).
\end{equation}

One of our main results~\cite[Theorem 3.16]{DHRel} extends
the famous result of Bollob\'as and Riordan~\cite[Theorem 2]{BR}
on colored Tutte polynomials to colored relative Tutte polynomials.
Its proof extends without any change to the situation when the set of
zero edges is a $\Lambda_0$-colored subgraph.

\begin{theorem}\label{DHRel}
Assume $I$ is an ideal of $\R[x_\lambda, X_{\lambda},
  y_{\lambda}, Y_{\lambda} \::\: \lambda\in \Lambda]$. Then the
homomorphic image of $T_\H(G,\phi)$ in $\R[x_\lambda, X_{\lambda},
  y_{\lambda}, Y_{\lambda} \::\: \lambda\in \Lambda]/I$ is
independent of $\phi$ (for any $G$ and $\psi$) if and only if
\begin{equation}\label{Ieq1}
\det\left(\begin{array}{ll}
X_\lambda & y_\lambda\\
X_\mu & y_\mu
\end{array}\right)
- \det\left(\begin{array}{ll}
x_\lambda & Y_\lambda\\
x_\mu & Y_\mu
\end{array}\right)\in I
\end{equation}
and
\begin{equation}\label{Ieq2}
\det\left(\begin{array}{ll}
x_\lambda & Y_\lambda\\
x_\mu & Y_\mu
\end{array}\right)
- \det\left(\begin{array}{ll}
x_\lambda & y_\lambda\\
x_\mu & y_\mu
\end{array}\right)\in I.
\end{equation}
hold for all $\lambda,\mu \in \Lambda$.
\end{theorem}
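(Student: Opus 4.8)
The plan is to reduce the statement to a local claim about how $T_\H(G,\phi)$ changes when we swap the labels of two edges that are adjacent in the labeling order, exactly as in the proof of the analogous colored (non-relative) theorem in \cite{BR} and the relative version \cite[Theorem 3.16]{DHRel}. First I would observe that any two proper labelings $\phi$ and $\phi'$ of $G$ with respect to $\H$ differ by a permutation of the labels on $E(G)\setminus\H$, and any such permutation is a composition of transpositions of consecutively labeled regular edges. Since the zero edges always receive label $0$ and are never touched by the deletion-contraction process, it suffices to show that transposing the labels of two regular edges $e_1,e_2$ with consecutive labels changes $T_\H(G,\phi)$ by an element of $I$, whenever \eqref{Ieq1} and \eqref{Ieq2} hold; conversely, for the ``only if'' direction I would exhibit a small graph (a single pair of multiple edges, or a path of two edges, colored by $\lambda$ and $\mu$) on which the label swap forces the stated determinant combinations to lie in $I$.

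For the ``if'' direction, the key step is the combinatorial bijection between contracting sets relative to $\phi$ and those relative to $\phi'=\phi\circ(e_1\,e_2)$. By Lemma~\ref{l:descend}, building a contracting set amounts to scanning the regular edges in decreasing label order and deciding, for each, whether to contract (if it is not currently a loop) or delete (if it is not currently a bridge). Swapping two consecutive labels only affects the two steps corresponding to $e_1$ and $e_2$; I would organize the contracting sets into ``blocks'' indexed by the common decisions made on all edges other than $e_1,e_2$, and then analyze, case by case, the $2\times 2$ situation at $\{e_1,e_2\}$. In the generic cases the same choices are available under $\phi$ and $\phi'$ and the weights of $e_1,e_2$ are simply interchanged, so the contribution is unchanged; the interesting cases are when after processing the larger-labeled edges, $e_1$ and $e_2$ form a $2$-cycle (parallel edges) or a $2$-cocycle (a series pair), i.e. when exactly one of them can be contracted and one deleted but not freely. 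There the four relevant local weight-products are $X_\lambda y_\mu$, $x_\lambda Y_\mu$ (and the $\lambda\leftrightarrow\mu$ swaps), and matching the $\phi$-sum to the $\phi'$-sum requires precisely that the differences in \eqref{Ieq1} and \eqref{Ieq2} vanish modulo $I$; note also that in all these local configurations $\H_\C$ is the same graph (contracting/deleting either of two parallel or two series edges yields the same $\H_\C$ up to the identifications already built into $\psi$), so $\psi(\H_\C)$ factors out of the comparison and the argument is insensitive to the choice of $\psi$.

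The main obstacle, and the only place the relative setting genuinely differs from \cite{BR}, is bookkeeping the zero edges while verifying that $\H_\C$ is literally unchanged (not merely vertex-pivot equivalent) under the local swap, so that $\psi(\H_\C)$ can be pulled out as a common factor regardless of whether $\psi$ is a block invariant or only a vertex pivot invariant. I would handle this by noting that the two edges $e_1,e_2$ being swapped are regular and have consecutive labels, so all contractions and deletions that produce $\H_\C$ happen either strictly before or strictly after the $\{e_1,e_2\}$ stage; the only effect of the swap is to possibly interchange ``contract $e_1$, delete $e_2$'' with ``contract $e_2$, delete $e_1$,'' and for parallel or series pairs these yield identical graphs (the underlying set operation on $E(G)$ is the same: one edge absorbed, one removed, leaving the same remaining edge set and the same vertex identifications up to the harmless one already absorbed). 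Everything else — the injectivity requirement on $\phi|_{E(G)\setminus\H}$, the fact that $\H$ is a $\Lambda_0$-colored subgraph — plays no role in the swap argument, which is why the proof of \cite[Theorem 3.16]{DHRel} carries over verbatim; I would simply remark that the extension to a multicolored $\H$ requires no change, since the colors in $\Lambda_0$ enter only through $\psi$ and never through the weights $w(G,c,\phi,\C,e)$ of regular edges.
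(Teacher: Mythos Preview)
The paper does not actually prove this theorem: it simply quotes \cite[Theorem 3.16]{DHRel} (itself an extension of \cite[Theorem 2]{BR}) and remarks that ``its proof extends without any change to the situation when the set of zero edges is a $\Lambda_0$-colored subgraph.'' Your outline is exactly the standard adjacent-transposition argument used in those references, so your approach agrees with what the paper defers to; there is nothing to compare beyond that.

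One small point worth tightening: in the interesting local cases you need not only the $2$-cycle and $2$-cocycle configurations but also the mixed configuration where, after processing the higher-labeled edges, one of $e_1,e_2$ is a loop and the other is a bridge (or more generally where the available choices for $e_1$ and $e_2$ are asymmetric in a way that makes three rather than two or four contracting sets appear on each side). In the Bollob\'as--Riordan analysis this is what forces \emph{both} determinant relations \eqref{Ieq1} and \eqref{Ieq2} rather than a single one; your sketch mentions only the parallel/series cases, which by themselves give one relation. This is a bookkeeping matter rather than a conceptual gap, but make sure your case analysis is exhaustive when you write it out.
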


Motivated by this result, we will assume $T_\H(G,\phi)$ is defined in
the ring
$$\T(\R,\Lambda):=\R[x_\lambda, X_{\lambda},
  y_{\lambda}, Y_{\lambda} \::\: \lambda\in \Lambda]/I_1(\R,\Lambda)$$
where $I_1(\R,\Lambda)$ is the ideal of $\R[x_\lambda, X_{\lambda},
  y_{\lambda}, Y_{\lambda} \::\: \lambda\in \Lambda]$ generated by all
polynomials of the form \eqref{Ieq1} and \eqref{Ieq2}.
\begin{definition}
We call the ring $\T(\R,\Lambda)$ the {\em Tutte ring} associated to the
color set $\Lambda$ and the ring of coefficients $\R$.
\end{definition}
The relative Tutte polynomial, considered as an element of the Tutte
ring $\T(\R,\Lambda)$,  becomes independent of the choice of the proper
labeling $\phi$ and we may write $T_H(G)$ for
$T_H(G,\phi)$.  An immediate consequence of this fact is the
following corollary, see~\cite[Corollary 3.17]{DHRel}.

\begin{corollary}\label{Cor2.7}
$T_\H(G)$ can be computed via the following recursive formula, valid for
  any regular edge $e$, i.e., any $e\not\in \H$:
\begin{equation}\label{recur1}
T_\H(G)=\left\{
\begin{array}{ll}
y_\lambda T_\H(G- e) + x_\lambda T_\H(G/e),&\mbox{
if $e$ is neither a bridge nor a loop,}\\
X_\lambda T_\H(G/e), & \mbox{if  $e$ is a bridge,}\\
Y_\lambda T_\H(G- e), & \mbox{if $e$ is a loop.}\\
\end{array}
\right.
\end{equation}
In the above, $e\not\in\H$ is a regular edge, $\lambda=c(e)$,
$G- e$ is the graph obtained from $G$ by deleting $e$
and $G/e$ is the graph obtained from $G$ by contracting $e$.
\end{corollary}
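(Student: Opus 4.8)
The plan is to derive Corollary~\ref{Cor2.7} directly from the labeling-independence of $T_\H(G)$ in the Tutte ring $\T(\R,\Lambda)$, by choosing, for the fixed regular edge $e$, a proper labeling $\phi$ in which $e$ receives the largest label among all regular edges. With such a labeling, the recursive description of contracting and deleting sets furnished by Lemma~\ref{l:descend} says that building $\C$ and $\D$ by processing regular edges in decreasing order of label begins with the decision about $e$: put $e$ into $\C$ (and contract it) or into $\D$ (and delete it), subject only to the prohibition against contracting a loop or deleting a bridge. The rest of the contracting set is then a contracting set of either $G/e$ or $G-e$ with respect to $\H$ (the set $\H$ and the coloring $c$ restrict unchanged, since $e\notin\H$).

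First I would handle the case where $e$ is neither a bridge nor a loop of $G$. Then both choices for $e$ are legal, and the contracting sets $\C$ of $G$ with $e\in\C$ are in bijection with contracting sets $\C'$ of $G/e$ (via $\C=\C'\cup\{e\}$), while those with $e\in\D$ are in bijection with contracting sets $\C''$ of $G-e$. I would check that the weight $w(G,c,\phi,\C,e)$ of the edge $e$ itself is $x_\lambda$ in the first family and $y_\lambda$ in the second: since $e$ has the largest label among regular edges, using Definition~\ref{acti_alt_def}, an internally placed $e$ becomes a bridge only after deleting the (empty) set of larger deleting edges, i.e. iff $e$ is already a bridge in $G$, which it is not; similarly an externally placed $e$ is externally active iff it is already a loop, which it is not. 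Finally, because $e$ is processed first, the activity status and hence the weight of every other regular edge $e'$, as well as the graph $\H_\C$, is computed inside $G/e$ (resp. $G-e$) with the induced labeling $\phi'=\phi|_{E(G)\setminus\{e\}}$, exactly matching the corresponding contribution to $T_\H(G/e,\phi')$ (resp. $T_\H(G-e,\phi')$). Summing, $T_\H(G,\phi)=x_\lambda T_\H(G/e,\phi')+y_\lambda T_\H(G-e,\phi')$, and passing to the Tutte ring removes the dependence on the labelings. The bridge case is the same argument with the $e\in\D$ family suppressed (deleting a bridge is forbidden), and with the observation that an internally placed largest-label bridge is internally active, contributing weight $X_\lambda$; the loop case is dual, with the $e\in\C$ family suppressed and weight $Y_\lambda$.

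I expect the only real work to be the bookkeeping that guarantees the restriction $\C\mapsto\C\setminus\{e\}$ really does biject contracting sets of $G$ (with the prescribed membership of $e$) onto contracting sets of $G/e$ or $G-e$, and that activities of the remaining edges are unchanged under this restriction. This is precisely the content of the ``process the largest edge first'' reading of Lemma~\ref{l:descend}, together with the fact that contraction/deletion of $e$ commutes with the later contractions and deletions; the weight computation for $e$ itself via Definition~\ref{acti_alt_def} is then immediate because there are no larger regular edges to delete or contract. The potential subtlety — that $\H_\C$ might change — does not arise, since $e\notin\H$ and performing the operation on $e$ first simply means $\H_\C$ is literally the $\H_{\C'}$ computed in $G/e$ or $G-e$. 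Hence the recursion holds in $\T(\R,\Lambda)$, which is the assertion of the corollary.
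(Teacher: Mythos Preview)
Your proposal is correct and follows precisely the approach implicit in the paper: the paper states this corollary as an immediate consequence of the labeling-independence of $T_\H(G)$ in the Tutte ring $\T(\R,\Lambda)$ (citing \cite[Corollary 3.17]{DHRel}) without spelling out any details, and your argument---choosing a proper labeling in which $e$ has the largest label, invoking Lemma~\ref{l:descend} to process $e$ first, and then matching activities of the remaining edges to those in $G/e$ or $G-e$---is exactly the standard unpacking of that assertion.
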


\begin{remark}
{\em
In some situations, it is plausible to require that the set of colors
used to color the regular edges be disjoint from the set
$\Lambda_0$ used to color the zero edges, i.e. $c(E(G\setminus \H))\subseteq
\Lambda\setminus \Lambda_0$, although we do not need this restriction in
what is written above. For the sake of convenience and to avoid possible confusions, we will assume that $c(E(G\setminus \H))\subseteq
\Lambda\setminus \Lambda_0$ in the rest of this paper.}
\end{remark}

\section{The universal relative Tutte polynomial}\label{s21}

We now introduce the {\em universal relative Tutte
  polynomial} associated to a color set $\Lambda_0$.
\begin{definition}
\label{def:urT}
{\em Let $G$ be a $\Lambda$-colored graph and $\H$ a $\Lambda_0$-colored
 subset of its edges such that  $\Lambda_0\subseteq \Lambda$ and $c(E(G\setminus \H))\subseteq
\Lambda\setminus \Lambda_0$. Let us
 introduce a distinct variable $z_{[\Gamma]}$ for each vertex pivot
 equivalence class $[\Gamma]\in\VPE(\Lambda_0)$. Let  $\psi_{\Lambda_0}$ be
 the vertex pivot invariant that assigns to each
  $\Lambda_0$-colored graph $\Gamma$ the variable $z_{[\Gamma]}$ in the
  polynomial ring $\R[z_{[\Gamma]}\::\: [\Gamma]\in
    \VPE(\Lambda_0)]$. We call the relative Tutte
  polynomial
$$T_\H^{\psi_{\Lambda_0}}(G)\in \R[x_\lambda,X_\lambda,y_{\lambda},
    Y_{\lambda}, z_{[\Gamma]}\::\: \lambda\in \Lambda\setminus
    \Lambda_0, [\Gamma]\in
    \VPE(\Lambda_0)]/I_1 ({\R},\Lambda,\Lambda_0)$$
the {\em universal $\Lambda_0$-colored relative Tutte polynomial of $G$
  with respect to $\H$} and denote it by $T_\H^{\Lambda_0}(G)$. Here
$I_1 (\R,\Lambda,\Lambda_0)$ is
the ideal of $\R [x_\lambda,X_\lambda,y_{\lambda}, Y_{\lambda},
    z_{[\Gamma]}\::\: \lambda\in \Lambda\setminus \Lambda_0, [\Gamma]\in
    \VPE(\Lambda_0)]$
  generated by all polynomials of the
form \eqref{Ieq1} and \eqref{Ieq2} with $\lambda,\mu\in \Lambda
\setminus \Lambda_0$. We
call the ring
$$
\T(\R,\Lambda,\Lambda_0):=\R[x_\lambda,X_\lambda,y_{\lambda},
    Y_{\lambda}, z_{[\Gamma]}\::\: \lambda\in \Lambda\setminus
    \Lambda_0, [\Gamma]\in \VPE(\Lambda_0)]/I_1 ({\R},\Lambda,\Lambda_0)
$$
the {\em $\Lambda_0$-pointed Tutte ring associated to the color set
  $\Lambda$ and the ring of coefficients $\R$.}
}
\end{definition}
The ideal $I_1 (\R,\Lambda,\Lambda_0)$ in Definition~\ref{def:urT} above
is generated by polynomials not containing any of the variables
    $z_{[\Gamma]}$. Thus we have
\begin{equation}
\label{eq:ptr}
\T(\R,\Lambda,\Lambda_0)=\T(\R,\Lambda\setminus
\Lambda_0)[z_{[\Gamma]}\::\: [\Gamma]\in \VPE(\Lambda_0)].
\end{equation}
In other words, the $\Lambda_0$-pointed Tutte ring
$\T(\R,\Lambda,\Lambda_0)$ is a polynomial ring in which
the Tutte ring $\T(\R,\Lambda\setminus \Lambda_0)$ is the ring of
coefficients and $\{z_{[\Gamma]}\::\: [\Gamma]\in \VPE(\Lambda_0)\}$ is
the set of independent variables and the universal relative Tutte polynomial $T_\H^{\psi_{\Lambda_0}}(G)$
is a special element in the Tutte ring $\T(\R,\Lambda,\Lambda_0)$, namely one
that is a $\T(\R,\Lambda\setminus \Lambda_0)$-linear combination of the terms of the form $z_{[\Gamma]}$.
This observation makes the substitution map,
given in Theorem~\ref{thm:univ} below, well-defined.
This theorem justifies the adjective {\em universal} in the name of the
universal $\Lambda_0$-pointed relative Tutte polynomial.  We call it a
theorem only because of its importance, its proof is straightforward.

\begin{theorem}
\label{thm:univ}
Let $G$ be a $\Lambda$-colored graph and $\H$ a $\Lambda_0$-colored
 subset of its edges where $\Lambda_0\subseteq \Lambda$ and $c(E(G\setminus \H))\subseteq
\Lambda\setminus \Lambda_0$.
Let $\psi$ be a vertex pivot invariant of $\Lambda_0$-colored graphs
with values in an integral domain $\R$. Then the homomorphism
$$
\T(\R,\Lambda,\Lambda_0)\rightarrow \T(\R,\Lambda\setminus\Lambda_0),
$$
sending each element of $\T(\R,\Lambda\setminus \Lambda_0)$ into itself
and sending each $z_{[\Gamma]}$ into $\psi(\Gamma)$, sends the universal
  $\Lambda_0$-colored Tutte polynomial $T_\H^{\Lambda_0}(G)$ into
the relative Tutte polynomial $T_\H^\psi(G)$.
\end{theorem}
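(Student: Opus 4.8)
The plan is to reduce the statement to the defining expansion \eqref{eq2} of the relative Tutte polynomial as a sum over contracting sets, after fixing one arbitrary proper labeling, and then to apply the homomorphism summand by summand. Everything then comes down to two observations: that the homomorphism in the statement is well defined, and that it fixes each weight $W(G,c,\phi,\C)$ while sending $z_{[\H_\C]}$ to $\psi(\H_\C)$. Granting these, the conclusion is immediate from additivity and multiplicativity.

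First I would check that the prescribed map
$$h\colon \T(\R,\Lambda,\Lambda_0)\longrightarrow \T(\R,\Lambda\setminus\Lambda_0)$$
is a well-defined ring homomorphism. By \eqref{eq:ptr} the domain is the polynomial ring $\T(\R,\Lambda\setminus\Lambda_0)[z_{[\Gamma]}\::\:[\Gamma]\in\VPE(\Lambda_0)]$, so a homomorphism out of it is determined, with no constraints, by its restriction to the coefficient ring $\T(\R,\Lambda\setminus\Lambda_0)$ — which we take to be the identity — together with an arbitrary choice of image in $\T(\R,\Lambda\setminus\Lambda_0)$ for each variable $z_{[\Gamma]}$. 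Sending $z_{[\Gamma]}$ to $\psi(\Gamma)\in\R\subseteq\T(\R,\Lambda\setminus\Lambda_0)$ is legitimate precisely because $\psi$ is a vertex pivot invariant: the value $\psi(\Gamma)$ then depends only on the class $[\Gamma]\in\VPE(\Lambda_0)$, i.e.\ only on the variable $z_{[\Gamma]}$, so the assignment is unambiguous. Thus $h$ exists and is unique. (Well-definedness of $T_\H^{\Lambda_0}(G)$ and $T_\H^\psi(G)$ as elements of their respective rings, independently of $\phi$, is the content of Theorem~\ref{DHRel}: all regular edges of $G$ are colored from $\Lambda\setminus\Lambda_0$, so only the relations \eqref{Ieq1}--\eqref{Ieq2} with $\lambda,\mu\in\Lambda\setminus\Lambda_0$ intervene, and these lie in both $I_1(\R,\Lambda,\Lambda_0)$ and $I_1(\R,\Lambda\setminus\Lambda_0)$.)

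Next I would fix a proper labeling $\phi$ of $G$ with respect to $\H$ and write, using \eqref{eq2},
$$T_\H^{\Lambda_0}(G)=\sum_{\C} W(G,c,\phi,\C)\, z_{[\H_\C]}, \qquad T_\H^{\psi}(G)=\sum_{\C} W(G,c,\phi,\C)\,\psi(\H_\C),$$
the sums running over the contracting sets $\C$ of $G$ with respect to $\H$, with $\H_\C$ the graph produced by the corresponding deletions and contractions in the order dictated by $\phi$ (the same $\H_\C$ in both formulas). Since every regular edge $e$ satisfies $e\notin\H$, hence $c(e)\in\Lambda\setminus\Lambda_0$ by hypothesis, each weight $w(G,c,\phi,\C,e)$ is one of $x_{c(e)},X_{c(e)},y_{c(e)},Y_{c(e)}$ with $c(e)\in\Lambda\setminus\Lambda_0$; therefore $W(G,c,\phi,\C)$ is a monomial in the variables indexed by $\Lambda\setminus\Lambda_0$, i.e.\ an element of the coefficient subring $\T(\R,\Lambda\setminus\Lambda_0)$ of $\T(\R,\Lambda,\Lambda_0)$, and so $h(W(G,c,\phi,\C))=W(G,c,\phi,\C)$. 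Applying $h$ and using $h(z_{[\H_\C]})=\psi(\H_\C)$ together with the ring-homomorphism properties then gives
$$h\bigl(T_\H^{\Lambda_0}(G)\bigr)=\sum_{\C} h\bigl(W(G,c,\phi,\C)\bigr)\,h\bigl(z_{[\H_\C]}\bigr)=\sum_{\C} W(G,c,\phi,\C)\,\psi(\H_\C)=T_\H^{\psi}(G),$$
which is the assertion.

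I do not expect any real obstacle here; the only care needed is bookkeeping — keeping straight the $\phi$-dependence of $\H_\C$, the passage from a representative $\Gamma$ to its class $[\Gamma]$, and the identification \eqref{eq:ptr} of $\T(\R,\Lambda\setminus\Lambda_0)$ as a subring of $\T(\R,\Lambda,\Lambda_0)$ — and each of these is absorbed by invoking the vertex pivot invariance of $\psi$ (and of $\psi_{\Lambda_0}$) or Theorem~\ref{DHRel}. Once $\phi$ is fixed, the displayed equalities are an identity of genuine polynomials, in agreement with the remark preceding the statement that the proof is straightforward.
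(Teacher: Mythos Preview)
Your proposal is correct and is precisely the ``straightforward'' argument the paper alludes to but omits: the paper gives no proof, merely noting before the statement that it is called a theorem only for its importance. Your term-by-term application of the substitution homomorphism to the expansion \eqref{eq2}, together with the observation via \eqref{eq:ptr} that the map is well defined and fixes the weights $W(G,c,\phi,\C)$, is exactly what is intended.
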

\begin{remark}
{\em $\T(\R,\Lambda,\Lambda_0)$ is a polynomial ring with infinitely many
variables $z_{[\Gamma]}$. However, if we consider only colored graphs with at most $N$
edges, where $N$ is any positive integer, then it may be replaced
with a polynomial ring with only finitely many variables.
}
\end{remark}

\section{Pointed universal relative Tutte polynomials}\label{s3}

In analogy to the main results in~\cite[Theorem 5.1]{DGH1} and
\cite[Theorem 3]{DHCl}, we want to obtain a
formula for the universal relative  Tutte polynomial of a
$\lambda$-colored  tensor product $G_1 \otimes_\lambda G_2$ of two
$\Lambda$-colored graphs that have $\Lambda_0$-colored subsets of
zero edges $\H_1$ and $\H_2$ (we will assume $\Lambda_0\subset
\Lambda$ and $\lambda\in \Lambda\setminus \Lambda_0$). Similarly to the
formulation in~\cite{DGH1,DHCl}, our formula will make use of pointed
variants of the universal relative  Tutte polynomial. Two of these
variants will be generalizations of the polynomials $T_C(G,e)$ and $T_L(G,e)$
that were already introduced in \cite{DGH1,DHCl} and which are
generalizations of pointed Tutte polynomials introduced by
Brylawski~\cite{B,BO}. A third variant arises with the presence of zero edges.
We will also have to introduce two further pointed Tutte polynomials
which will assume the role played by $T(G/e)$ and by $T(G-e)$, respectively,
in~\cite{DGH1,DHCl}. As before, when considering the $\lambda$-colored
tensor product $G_1 \otimes_\lambda G_2$, all pointed relative Tutte polynomials
will be computed for the pointed graph $G_2$.

In this section we assume that $G$ is a pointed connected graph with a
distinguished edge $e$ which is neither a loop nor a bridge, and $\H$
is a subset of $E(G)$ not containing $e$. We assume that $G$ is a
$\Lambda\cup\{\nu\}$-colored graph where $\nu\not\in \Lambda$, $\H$ is
$\Lambda_0$-colored where $\Lambda_0\subset \Lambda$. The
distinguished edge $e$ is marked by the unique color $\nu\not\in \Lambda$ to avoid possible confusions. Denote the set $\Lambda\cup \{\nu\}$ by $\Lambda^\p$, $\Lambda_0\cup \{\nu\}$ by $\Lambda_0^\p$ and $\H\cup \{e\}$ by $\H^\p$. The first three pointed Tutte
polynomials to be introduced are homomorphic images of the
universal $\Lambda_0^\p$-colored relative Tutte polynomial $T_{\H^\p}^{\Lambda_0^\p}(G)$. When we calculate $T_{\H^\p}^{\Lambda_0^\p}(G)$, we consider the distinguished edge $e$ as a zero edge. Again, let us stress that the color $\nu$ assigned to $e$ is different from the colors of all other (regular or zero) edges.

We want to classify the pairs $(\C,\D)$ of contracting sets and
corresponding deleting sets with respect to $\H^\p$ into three classes,
depending on their relation to the distinguished edge $e$, as follows:
\begin{definition}
\label{def:types}
Let $G$ be a pointed graph with distinguished edge $e$ and set of zero
edges $\H^\prime=\H\cup\{e\}$. Let $\C$ be a contracting set
of $G$ with respect to $\H^\prime$ and let $\D$ be the corresponding
deleting set.
\begin{itemize}
\item[(i)] We say that $(\C,\D)$ has {\em type $\mathscr{C}$} if
  $\C\cup\{e\}$ contains a cycle;
\item[(ii)] We say that $(\C,\D)$ has {\em type
  $\mathscr{D}$} if $\D\cup\{e\}$ contains a cocycle;
\item[(iii)] We say that $(\C,\D)$ has {\em type
  zero} if it has neither type $\mathscr{C}$ nor type $\mathscr{D}$.
\end{itemize}
To simplify our terminology we will also say that a contracting set
$\C$, or a deleting set $\D$ has type $\mathscr{C}$, $\mathscr{D}$ or zero, if the unique
pair $(\C,\D)$ formed with the corresponding deleting or contracting set
has the same type.
\end{definition}
The choice of letters to denote the types may seem counter-intuitive in
this section, the motivation will become clear in Section~\ref{sec:cs}.
Notice that an equivalent condition for $(\C,\D)$ to be of type zero is that
$\C\cup\H^\p$ contains a cycle but $\C\cup\{e\}$ does not.
Furthermore, $(\C,\D)$ cannot have type $\mathscr{C}$ and type $\mathscr{D}$ simultaneously: if $e$ closes a cycle with $\C$ in $G$ then
after removing all edges of $\D$ and the edge $e$ from $G$,
the endpoints of $e$ are still connected via a path containing the
edges in $\C$, hence deleting $\D\cup\{e\}$ will not increase the number of connected components in $G$. Thus
another equivalent description of the three types may be stated as
follows.
\begin{proposition}
\label{prop:tequiv}
$(\C,\D)$ has type $\mathscr{C}$, $\mathscr{D}$, or zero,
respectively, if and
only if after contracting the edges of $\C$ and deleting the edges of
$\D$ in $G$, the edge $e$ becomes a loop, bridge, or
neither loop nor bridge, respectively. In the type zero case, after contracting the edges of $\C$ and deleting the edges of
$\D$ in $G$, there is a path consisting of zero edges only (in
$\H$) connecting the endpoints of $e$.
\end{proposition}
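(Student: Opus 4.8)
The plan is to prove the three characterizations simultaneously by analyzing what happens to the distinguished edge $e$ after we perform the contractions and deletions prescribed by $(\C,\D)$. Since $e\in\H^\p$, the edge $e$ is untouched by these operations and survives in the resulting graph; call this graph $G^*$, so that $e$ is an edge of $G^*$ together with the remaining zero edges of $\H$. The key preliminary observation is that, because a cycle or cocycle is preserved (possibly with some members deleted or contracted away) in the expected manner, the three conditions in Definition~\ref{def:types} translate into statements about $e$ in $G^*$. I would carry this out as follows.

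First I would treat type $\mathscr{C}$. If $\C\cup\{e\}$ contains a cycle $C_0$, then $C_0\setminus\{e\}\subseteq\C$, and contracting all edges of $\C$ in particular contracts every edge of $C_0\setminus\{e\}$, which identifies the two endpoints of $e$; hence $e$ becomes a loop in $G^*$. Conversely, if $e$ becomes a loop after contracting $\C$ and deleting $\D$, then its endpoints were identified using only edges that were contracted, i.e.\ edges of $\C$, so these contracted edges together with $e$ contain a cycle, giving type $\mathscr{C}$. Next, type $\mathscr{D}$: if $\D\cup\{e\}$ contains a cocycle $D_0$, then $D_0\setminus\{e\}\subseteq\D$ and deleting all of $\D$ deletes every other edge of the cocycle $D_0$; since removing a cocycle disconnects the graph and $e$ is the last remaining edge of it, $e$ becomes a bridge in $G^*$ (noting $G$, hence $G^*$ up to components, is connected). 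Conversely, if $e$ is a bridge in $G^*$, then the set consisting of $e$ together with the edges of $\D$ whose deletion helped separate the two sides of $e$ contains a cocycle through $e$, so $(\C,\D)$ has type $\mathscr{D}$. The type zero case is then immediate by elimination, using the already-noted fact in the text that types $\mathscr{C}$ and $\mathscr{D}$ are mutually exclusive: $(\C,\D)$ is type zero iff it is neither $\mathscr{C}$ nor $\mathscr{D}$ iff $e$ is neither a loop nor a bridge in $G^*$.

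For the final sentence about type zero, suppose $(\C,\D)$ has type zero, so $e$ is neither a loop nor a bridge in $G^*$. Since $e$ is not a bridge, its two endpoints $u,v$ lie in the same connected component of $G^*\setminus\{e\}$, hence are joined by a path $P$ in $G^*\setminus\{e\}$. I must argue $P$ may be taken to consist of zero edges only. Here I would invoke the equivalent description noted right before the proposition: type zero means $\C\cup\H^\p$ contains a cycle through $e$ while $\C\cup\{e\}$ does not. The edges of $\C$ have all been contracted and contribute nothing to $G^*\setminus\{e\}$; what remains are the zero edges of $\H$. A cycle in $\C\cup\H^\p$ containing $e$, after we contract the $\C$-edges on it, becomes a cycle in $G^*$ through $e$ whose other edges all lie in $\H$; deleting $e$ from this cycle leaves precisely a path of zero edges connecting $u$ and $v$ in $G^*$. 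That completes the argument.

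The main obstacle I anticipate is bookkeeping rather than conceptual: one must be careful that the notions of ``loop,'' ``bridge,'' and ``cocycle'' behave correctly after a mixed sequence of contractions and deletions performed in the order dictated by the proper labeling $\phi$, and in particular that the multiset of relevant topological features of $G^*$ does not depend on that order. This is exactly the content of the order-independence results quoted earlier (\cite[Lemma 3.14]{DHRel} and the discussion of vertex pivots in \cite[Section 4]{DHRel}), so I would lean on those to keep the proof short. A secondary subtlety is the connectivity hypothesis: since $G$ is assumed connected and $e$ is neither a loop nor a bridge in $G$, contracting $\C$ and deleting $\D$ keeps the component containing $e$ intact apart from the explicit operations, so the ``bridge'' characterization is unambiguous even if the deletions create other components elsewhere.
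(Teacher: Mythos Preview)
The paper does not give a separate proof of this proposition; it is stated as an immediate observation following the short paragraph that precedes it (which notes the mutual exclusivity of types $\mathscr{C}$ and $\mathscr{D}$ and the cycle characterization of type zero in terms of $\C\cup\H'$). Your proposal is correct and simply supplies the routine verifications the paper leaves implicit, along the same line of thought; the only cosmetic gap is that in the forward directions you should note explicitly that $e\in C_0$ (respectively $e\in D_0$) because $\C$ contains no cycle (respectively $\D$ contains no cocycle), which you use when asserting that contracting $C_0\setminus\{e\}$ identifies the endpoints of $e$.
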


The next statements depend on, and also characterize the type of $(\C,\D)$.

\begin{proposition}
\label{prop:typeC}
If $(\C,\D)$ is of type $\mathscr{C}$ then $\C$ is a  contracting set with
respect to $\H$ but  $\C\cup\{e\}$ is not a contracting set.
\end{proposition}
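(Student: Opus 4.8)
The plan is to verify the two assertions separately, using the characterization of contracting/deleting sets from Lemma~\ref{l:descend}, or equivalently the description following it: a contracting set with respect to a fixed set of zero edges is obtained by running through the regular edges in decreasing label order and, for each, either contracting it (provided it is not a loop at that stage) or deleting it (provided it is not a bridge). The key observation is that $\H = \H^\p \setminus \{e\}$, so moving from $\H^\p$ to $\H$ changes exactly one edge, $e$, from ``zero'' to ``not present in the deleting set's complement in the usual sense'' — more precisely, a set $\C \subseteq E(G) \setminus \H^\p$ of regular edges is simultaneously a candidate contracting set with respect to $\H$ (with deleting set $\D$, since $E(G)\setminus(\C\cup\H) = \D \cup \{e\}$... ) — wait, care is needed here, and this is exactly the subtlety the proof must navigate: with respect to $\H$ the edge $e$ is no longer a zero edge, so it must be sorted into either the contracting or the deleting set. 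I would first fix that $\C$ (which contains no $e$) is being tested as a contracting set with respect to $\H$, meaning $\D \cup \{e\}$ plays the role of the deleting set.

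First I would show $\C$ is a contracting set with respect to $\H$. By Lemma~\ref{l:descend} applied with zero-edge set $\H$, I must check: (1) no $e' \in \C$ becomes a loop after contracting the edges of $\C$ and deleting the edges of $\D \cup \{e\}$ with label $\geq \phi(e')$; and (2) no $g \in \D \cup \{e\}$ becomes a bridge after the analogous operations. For condition (1): since $(\C,\D)$ is a valid contracting/deleting pair with respect to $\H^\p$, no $e'\in\C$ becomes a loop when we contract $\C$-edges and delete $\D$-edges of larger label — and additionally deleting the extra edge $e$ can only destroy loops, never create them, so (1) holds. For condition (2): edges $g \in \D$ did not become bridges with respect to $\H^\p$ (where $e$ was a zero edge, hence present in the graph throughout), and removing $e$ from the picture — here I use that $(\C,\D)$ has type $\mathscr{C}$, so by Proposition~\ref{prop:tequiv}/Proposition~\ref{prop:typeC}'s setup $e$ closes a cycle with $\C$, meaning after contracting $\C$ the edge $e$ is a loop, so deleting $e$ changes nothing about bridges among the $\D$-edges once enough $\C$-edges are contracted; one checks the label ordering causes no problem since $e$, being a loop after contracting all of $\C$, is a loop after contracting any subset, hence its deletion never promotes a $\D$-edge to a bridge. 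Finally $e$ itself, viewed as the last (or any) edge to process, is a loop after contracting all larger $\C$-edges precisely because $\C\cup\{e\}$ contains a cycle — so $e$ must go into $\D$ and does not become a bridge, confirming $e\in\D$ is legitimate. Hence $\C$ is a contracting set with respect to $\H$ with deleting set $\D\cup\{e\}$.

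Second, I would show $\C\cup\{e\}$ is \emph{not} a contracting set with respect to $\H$. This is immediate and is the easy half: by Definition~\ref{TD1} a contracting set contains no cycles, but $\C\cup\{e\}$ contains a cycle by the very definition of type $\mathscr{C}$.

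The main obstacle is the bookkeeping in condition (2) above — ensuring that introducing the deletion of $e$ into the decreasing-order process does not retroactively turn some previously-safe deletion of a $\D$-edge into the deletion of a bridge. The clean way around it is to exploit that type $\mathscr{C}$ forces $e$ to be a loop after contracting the \emph{entire} set $\C$ (Proposition~\ref{prop:tequiv}), hence after contracting any down-set of $\C$; a loop can be deleted with no effect on connectivity, so inserting ``delete $e$'' at any point in the sequence is harmless, and the order-independence of the block structure (\cite[Lemma 3.14]{DHRel}) lets us not worry about \emph{where} exactly $e$ is processed. That reduces the whole argument to the two elementary observations that deleting an edge never creates loops and that deleting a loop never creates bridges.
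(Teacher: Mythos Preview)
Your approach via Lemma~\ref{l:descend} is different from the paper's, which verifies Definition~\ref{TD1} directly: $\C$ contains no cycle because it is already a contracting set with respect to $\H'$, and $\D\cup\{e\}$ contains no cocycle because removing $\D$ does not disconnect $G$ (given) and then removing $e$ does not disconnect either, since the endpoints of $e$ are joined by a path lying entirely in $\C$. That is a two-line argument once you state it that way.

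Your route can be made to work, but as written it contains a genuine error. You assert that ``$e$, being a loop after contracting all of $\C$, is a loop after contracting any subset.'' This implication goes the wrong direction: contracting \emph{more} edges of $\C$ can only turn $e$ into a loop, never the reverse. For a concrete failure, take a $4$-cycle $a\,b\,c\,d$ with $e=ab$ and $\C=\{bc,cd,da\}$; after contracting only $bc$ the edge $e$ still has distinct endpoints. So the sentence you rely on to handle condition~(2) for $g\in\D$ with an arbitrary placement of $e$ in the label order is false, and the parenthetical ``(or any)'' when you process $e$ itself is likewise unjustified.

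The fix is exactly what you half-gesture at but do not commit to: choose the proper labeling with respect to $\H$ so that $e$ receives the \emph{smallest} positive label. Then for every $e'\in\C$ and every $g\in\D$, the edge $e$ has not yet been deleted when $e'$ or $g$ is processed, so the situation is identical to the $\H'$ case and conditions~(1) and~(2) carry over verbatim. When you finally reach $e$, all of $\C$ has been contracted, $e$ is a loop by the type~$\mathscr{C}$ hypothesis, and hence $e$ is not a bridge. With that single labeling choice fixed at the outset, your argument goes through cleanly and the appeal to order-independence of the block structure becomes unnecessary.
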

\begin{proof}
Clearly $\C\cup\{e\}$ is not a contracting set since $\C$ contains a
path connecting the endpoints of $e$, and adding $e$ to this path
creates a cycle. As seen in the proof of Proposition~\ref{prop:local},
the set $\C$ does not contain any cycle. We only need to check that
$\D\cup \{e\}$ contains no cocycle in $G$.
This may be performed in perfect analogy to the
corresponding part in the proof of Proposition~\ref{prop:local}, the
only difference being that the path $\gamma'$ introduced in that proof
may now be replaced by the path in $\C$ connecting the endpoints of $e$.
\end{proof}
\begin{proposition}
\label{prop:typeD}
If $(\C,\D)$ is of type $\mathscr{D}$ then $\C\cup\{e\}$ is a contracting set with respect to $\H$ but
$\C$ is not a contracting set.
\end{proposition}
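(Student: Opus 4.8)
The plan is to check the two assertions directly against Definition~\ref{TD1}, using the observation that passing from the set of zero edges $\H^\prime=\H\cup\{e\}$ to the set $\H$ only moves the single edge $e$ out of the zero edges, so that the candidate deleting set is the \emph{same} set $\D$ in both situations. Indeed, since $e\notin\H$ and $\C\subseteq E(G)\setminus\H^\prime$, we have $(\C\cup\{e\})\cup\H=\C\cup\H^\prime$ and $\C\cup\H=(\C\cup\H^\prime)\setminus\{e\}$, hence $E(G)\setminus\big((\C\cup\{e\})\cup\H\big)=\D$ while $E(G)\setminus(\C\cup\H)=\D\cup\{e\}$.

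The second assertion is then immediate: because $(\C,\D)$ has type $\mathscr{D}$, the set $\D\cup\{e\}=E(G)\setminus(\C\cup\H)$ contains a cocycle of $G$, so the requirement of Definition~\ref{TD1} fails and $\C$ is not a contracting set with respect to $\H$.

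For the first assertion, set $\C^\prime=\C\cup\{e\}$; by the paragraph above $\C^\prime\subseteq E(G)\setminus\H$ and its complement outside $\H$ is exactly $\D$, so by Definition~\ref{TD1} it suffices to verify that $\C^\prime$ contains no cycle and that $\D$ contains no cocycle. The latter holds since $\D$ is, by hypothesis, the deleting set associated to the contracting set $\C$ with respect to $\H^\prime$. For the former, $\C$ itself contains no cycle (being a contracting set), so any cycle inside $\C^\prime$ would necessarily use $e$ — equivalently, $\C$ would contain a path joining the endpoints of $e$ — which is precisely the condition defining type $\mathscr{C}$. Since a pair cannot be of type $\mathscr{C}$ and of type $\mathscr{D}$ at the same time (as observed just before Proposition~\ref{prop:tequiv}), this is impossible; hence $\C^\prime$ is cycle-free and therefore a contracting set with respect to $\H$.

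I do not expect a genuine obstacle here: the argument is the exact dual of the proof of Proposition~\ref{prop:typeC} (interchanging cycles with cocycles, and contraction with deletion), and the only ingredient beyond Definition~\ref{TD1} is the mutual exclusivity of types $\mathscr{C}$ and $\mathscr{D}$, already established. The one small point worth noting is that $\C^\prime=\C\cup\{e\}$ is a legitimate subset of $E(G)\setminus\H$, i.e.\ that $e\notin\H$, which is part of the standing hypothesis on $G$.
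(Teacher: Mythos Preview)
Your proof is correct and follows essentially the same route as the paper's. The only cosmetic differences are that you invoke the already-established mutual exclusivity of types $\mathscr{C}$ and $\mathscr{D}$ to rule out a cycle in $\C\cup\{e\}$, whereas the paper re-derives this directly from the type $\mathscr{D}$ hypothesis (deleting $\D\cup\{e\}$ disconnects the endpoints of $e$, so no path in $\C$ joins them); and you obtain ``$\D$ contains no cocycle'' straight from the hypothesis that $\C$ is a contracting set with respect to $\H^\prime$, while the paper refers to the argument in Proposition~\ref{prop:local}. Both are minor repackagings of the same verification against Definition~\ref{TD1}.
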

\begin{proof}
Deleting all edges of $\D\cup\{e\}$ from $G$
disconnects the endpoints of $e$. Thus the set $\D\cup\{e\}$ is not a
deleting set with respect to $\H$ and there is no path in $\C$ connecting the end points of $e$.
Equivalently, $\C$ is not a contracting set and $\C\cup \{e\}$ contains no cycle. The proof of the fact that $\D$
contains no cocycle of $G$ is identical to the corresponding
part of the proof of Proposition~\ref{prop:local}.
\end{proof}
\begin{proposition}
\label{prop:type0}
If $(\C,\D)$ is of type
zero then both $\C$ and $\C\cup\{e\}$ are contracting sets with
respect to $\H$.
\end{proposition}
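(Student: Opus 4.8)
The plan is to show that if $(\C,\D)$ is of type zero, then (1) $\C$ contains no cycle, (2) $\C\cup\{e\}$ contains no cycle, (3) $\D\cup\{e\}$ contains no cocycle of $G$, and (4) $\D$ contains no cocycle of $G$. Since $\C\subseteq E(G)\setminus\H^\prime\subseteq E(G)\setminus\H$, items (1) and (3) together say that $\C$ is a contracting set with respect to $\H$ (with corresponding deleting set $\D\cup\{e\}$), while items (2) and (4) together say that $\C\cup\{e\}$ is a contracting set with respect to $\H$ (with corresponding deleting set $\D$). Note that (1) is immediate, since $\C$ is a contracting set with respect to $\H^\prime$, hence contains no cycle. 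For (2), since $(\C,\D)$ is not of type $\mathscr{C}$, by definition $\C\cup\{e\}$ contains no cycle, so this is also immediate. The real content is in (3) and (4), the cocycle conditions.

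For (4), $\D$ contains no cocycle of $G$ because $(\C,\D)$ is not of type $\mathscr{D}$: by definition this means $\D\cup\{e\}$ contains no cocycle, and since $\D\subseteq\D\cup\{e\}$, certainly $\D$ contains no cocycle. (Equivalently, $\D=E(G)\setminus(\C\cup\H^\prime)$ contains no cocycle because $\C$ is a contracting set with respect to $\H^\prime$ and $e$ being added back can only help; one should be careful that $E(G)\setminus((\C\cup\{e\})\cup\H) = \D$, so this is exactly the deleting-set condition for $\C\cup\{e\}$ relative to $\H$.) For (3), I must show $\D\cup\{e\}=E(G)\setminus(\C\cup\H)$ contains no cocycle of $G$. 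This is precisely the statement that $\D\cup\{e\}$ is the deleting set for $\C$ with respect to $\H$, and it is the condition that $(\C,\D)$ is \emph{not} of type $\mathscr{D}$ — which holds by assumption. So this direction is also essentially definitional once the bookkeeping of which set plays the role of the deleting set is sorted out.

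The cleanest route, rather than re-proving cocycle-freeness from scratch, is to invoke Proposition~\ref{prop:tequiv}: in the type zero case, after contracting $\C$ and deleting $\D$, the edge $e$ is neither a loop nor a bridge, and in fact there is a path of zero edges in $\H$ joining the endpoints of $e$. From "$e$ is not a loop" one reads off that $\C\cup\{e\}$ has no cycle (reproving (2)); from "$e$ is not a bridge" one reads off that $\D\cup\{e\}$ has no cocycle of $G$ (giving (3)); and the type zero hypothesis itself rules out type $\mathscr{C}$ and type $\mathscr{D}$, which give the remaining conditions. Then one checks the two set-theoretic identities $E(G)\setminus(\C\cup\H)=\D\cup\{e\}$ and $E(G)\setminus((\C\cup\{e\})\cup\H)=\D$, which are immediate from $\H^\prime=\H\cup\{e\}$ and $e\notin\H$, to conclude that $\C$ and $\C\cup\{e\}$ are both contracting sets with respect to $\H$.

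The main (minor) obstacle is purely notational: keeping straight that the deleting set corresponding to $\C$ relative to $\H^\prime$ is $\D$, but the deleting set corresponding to $\C$ relative to $\H$ is $\D\cup\{e\}$, and that the deleting set corresponding to $\C\cup\{e\}$ relative to $\H$ is $\D$. Once this is set up carefully, the proof is a short argument that mirrors (and is lighter than) the proofs of Propositions~\ref{prop:typeC} and~\ref{prop:typeD}: there one had to argue around a path $\gamma'$ creating a cycle or a disconnecting cut, whereas here no such obstruction exists, precisely because type zero excludes both. I expect the proof to be three or four sentences long.
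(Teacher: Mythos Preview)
Your proposal is correct and follows essentially the same approach as the paper: verify that $\C$ and $\C\cup\{e\}$ are cycle-free and that their complementary sets relative to $\H$ are cocycle-free. Your observation that item~(3), namely that $\D\cup\{e\}$ contains no cocycle, is literally the negation of ``type~$\mathscr{D}$'' is in fact slightly more direct than the paper's argument, which instead invokes the path in $\C\cup\H$ between the endpoints of $e$ (from Proposition~\ref{prop:tequiv}) to reach the same conclusion; both are valid, and your ``cleanest route'' paragraph already recovers the paper's version.
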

\begin{proof}
Since $(\C,\D)$ is of type zero, $\C\cup \{e\}$ (hence $\C$)
contains no cycle, but there is a path consisting of edges of
$\C\cup\H$ connecting the end vertices of $e$. The set $\D$
contains no cocycle by Proposition~\ref{prop:local}. Since there is a
path consisting of edges of $\C\cup\H$ connecting the end vertices
of $e$, adding $e$ to $\D$ does not create a cocycle in $G_2^f$.
\end{proof}
\begin{remark}
\label{rem:types}
{\em Since the premises in Propositions~\ref{prop:typeC},
  \ref{prop:typeD} and \ref{prop:type0} mutually exclude each other,
  the conclusions provide a {\em characterization} of the types of
  $(\C,\D)$: it has type $\mathscr{C}$ exactly when
$\C$ is a  contracting set with respect to $\H$ but
$\C\cup\{e\}$ is not, type $\mathscr{D}$ exactly when
   $\C\cup\{e\}$ is a  contracting set with respect to $\H$ but
$\C$ is not, and it type zero exactly when both $\C$ and
  $\C\cup\{e\}$ are contracting sets with respect to $\H$.
}
\end{remark}

To define our pointed Tutte polynomials we introduce five endomorphisms
of the $\Lambda_0$-pointed Tutte ring
$\T(\R,\Lambda^\p,\Lambda_0^\p)$, which is a polynomial ring
by~\eqref{eq:ptr}. The restriction of each of
these endomorphisms to $\T(\R,\Lambda)$ will be the identity map, thus
they can be given by prescribing their effect on the variables
$\{z_{[\Gamma]}\::\: [\Gamma]\in \VPE(\Lambda_0^\p)\}$. The
first three maps, $\pi_{C}$, $\pi_{L}$ and $\pi_{0}$, leave $z_{[\Gamma]}$
unchanged for select types of graphs $\Gamma$ and they send all the
other $z_{[\Gamma]}$ into zero:
\begin{eqnarray*}
\pi_{C}(z_{[\Gamma]})&=&
\left\{
\begin{array}{rl}
z_{[\Gamma]}&\mbox{if $\Gamma$ has exactly one edge $f$ of color $\nu$ and
  $f$ is a bridge (coloop);}\\
0&\mbox{otherwise.}
\end{array}
\right.
\\
\pi_{L}(z_{[\Gamma]})&=&
\left\{
\begin{array}{rl}
z_{[\Gamma]}&\mbox{if $\Gamma$ has exactly one edge $f$ of color $\nu$ and
  $f$ is a loop;}\\
0&\mbox{otherwise.}
\end{array}
\right.
\\
\pi_{0}(z_{[\Gamma]})&=&
\left\{
\begin{array}{rl}
z_{[\Gamma]}&\mbox{if $\Gamma$ has exactly one edge $f$ of color $\nu$ and
  $f$ is neither a loop nor a bridge;}\\
0&\mbox{otherwise.}
\end{array}
\right.
\\
\end{eqnarray*}
The last two maps, $\pi_/$ and $\pi_-$ perform a contraction or deletion
on some graphs $\Gamma$, and send all other $z_{[\Gamma]}$ into zero:
\begin{eqnarray*}
\pi_{/}(z_{[\Gamma]})&=&
\left\{
\begin{array}{rl}
z_{[\Gamma/f]}&\mbox{if $\Gamma$ has exactly one edge $f$ of color $\nu$ and
  $f$ is not a loop;}\\
0&\mbox{otherwise.}
\end{array}
\right.
\\
\pi_{-}(z_{[\Gamma]})&=&
\left\{
\begin{array}{rl}
z_{[\Gamma-f]}&\mbox{if $\Gamma$ has exactly one edge $f$ of color $\nu$ and
  $f$ is not a bridge;}\\
0&\mbox{otherwise.}
\end{array}
\right.
\\
\end{eqnarray*}

\begin{definition}
\label{def:CL0}
We define the pointed universal $\Lambda_0$-colored relative Tutte
polynomials $T^{\Lambda_0^\p}_{\H,C}(G,e)$, $T^{\Lambda_0^\p}_{\H,L}(G,e)$ and
$T^{\Lambda_0^\p}_{\H,0}(G,e)$, respectively, as the image of $T_{\H^\p}^{\Lambda_0^\p}(G)$ under the endomorphism
$\pi_{/}\circ\pi_{C}$, $\pi_{-}\circ\pi_{L}$ and $\pi_{0}$,
respectively.
\end{definition}
Notice that
in the special case that $\H=\emptyset$, the
definitions of $T^{\Lambda_0^\p}_{\H,C}(G,e)$ and
$T^{\Lambda_0^\p}_{\H,L}(G,e)$ yield $T_C(G,e)\cdot
z_{[\bullet]}$ and $T_L(G,e)z_{[\bullet]}$, respectively. Here
$T_C(G,e)$ and $T_L(G,e)$ are the polynomials defined in
\cite{DGH1,DHCl} and $\bullet$ is the graph containing a single
vertex. Thus, in situations where there is no confusion about the sets $\Lambda$, $\Lambda_0$ and $\H$, we will simply use $T_C(G,e)$, $T_L(G,e)$ and
$T_0(G,e)$ as the abbreviations for $T^{\Lambda_0^\p}_{\H,C}(G,e)$, $T^{\Lambda_0^\p}_{\H,L}(G,e)$ and
$T^{\Lambda_0^\p}_{\H,0}(G,e)$ respectively. As a consequence of Theorem~\ref{DHRel}, the pointed universal
$\Lambda_0$-colored  relative Tutte polynomials defined above may be
computed by summing weights of contracting sets of $G$ with respect to
$\H^\p$. The weights will be assigned using a proper labeling with
respect to $\H^\p$, but the outcome will be independent of the
labeling. The following lemmas are direct consequences of the
definitions of $T_{C}$, $T_{L}$ and $T_{0}$.

\begin{lemma}
\label{l:TC}
A contracting set $\C$ of $G$ with respect to
$\H^\p$ contributes a zero term to $T_C(G,e)$ unless it has type $\mathscr{D}$.
\end{lemma}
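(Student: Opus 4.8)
The plan is to unwind the definition of $T_C(G,e)=T^{\Lambda_0^\p}_{\H,C}(G,e)$ as the image of the universal polynomial $T_{\H^\p}^{\Lambda_0^\p}(G)$ under the endomorphism $\pi_{/}\circ\pi_{C}$, and to track what each of these two maps does to the individual summand attached to a fixed contracting set. Writing
$T_{\H^\p}^{\Lambda_0^\p}(G)=\sum_{\C}W(G,c,\phi,\C)\,z_{[\H^\p_\C]}$, where $\H^\p_\C$ is the graph obtained from $G$ by contracting the edges of $\C$ and deleting the edges of $\D$, the contribution of a fixed $\C$ to $T_C(G,e)$ is $W(G,c,\phi,\C)\cdot(\pi_{/}\circ\pi_{C})(z_{[\H^\p_\C]})$. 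Hence it suffices to prove that $(\pi_{/}\circ\pi_{C})(z_{[\H^\p_\C]})=0$ whenever $(\C,\D)$ does not have type $\mathscr{D}$.

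First I would observe that $e$ is the \emph{unique} edge of $\H^\p_\C$ carrying the color $\nu$: the edge $e$ is a zero edge, so it lies in neither $\C$ nor $\D$ and therefore survives (uncontracted, undeleted) in $\H^\p_\C$; and by our standing assumption the color $\nu$ is used on no other edge of $G$. Consequently the clause ``$\Gamma$ has exactly one edge $f$ of color $\nu$'' in the definition of $\pi_{C}$ is automatically satisfied for $\Gamma=\H^\p_\C$, with $f=e$, and $\pi_{C}(z_{[\H^\p_\C]})$ equals $z_{[\H^\p_\C]}$ if $e$ is a bridge of $\H^\p_\C$ and equals $0$ otherwise.

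Next I would invoke Proposition~\ref{prop:tequiv}. Since $\H^\p_\C$ is by definition exactly the graph obtained from $G$ by contracting the edges of $\C$ and deleting the edges of $\D$, that proposition says $e$ is a bridge in $\H^\p_\C$ if and only if $(\C,\D)$ has type $\mathscr{D}$. Therefore $\pi_{C}(z_{[\H^\p_\C]})=0$ unless $(\C,\D)$ has type $\mathscr{D}$, which already gives the claim after also applying $\pi_{/}$. To be thorough I would note that $\pi_{/}$ cannot destroy a term that $\pi_{C}$ keeps: if $e$ is a bridge of $\H^\p_\C$ then $e$ is in particular not a loop, so $\pi_{/}(z_{[\H^\p_\C]})=z_{[\H^\p_\C/e]}$, a (nonzero) variable of the ring.

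There is no real obstacle here; the argument is a direct reading of the definitions together with Proposition~\ref{prop:tequiv}. The only points needing a sentence of care are the bookkeeping that $e$ is genuinely the sole $\nu$-colored edge remaining in $\H^\p_\C$, and the remark that the second endomorphism $\pi_{/}$ does not annihilate the terms $\pi_{C}$ retains. One may also wish to point out that although distinct contracting sets can produce vertex-pivot equivalent graphs $\H^\p_\C$ and so merge in $T_C(G,e)$, the statement concerns the term attached to a single $\C$ prior to any such merging, so no separate cancellation analysis is required.
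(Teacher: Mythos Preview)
Your proof is correct and follows exactly the route the paper indicates: the paper simply asserts that Lemmas~\ref{l:TC}--\ref{l:T0} are ``direct consequences of the definitions of $T_C$, $T_L$ and $T_0$,'' and your argument is precisely that unwinding, invoking Proposition~\ref{prop:tequiv} to translate the bridge condition on $e$ in $\H^\p_\C$ into the type-$\mathscr{D}$ condition. The extra care you take (uniqueness of the $\nu$-colored edge, the harmlessness of $\pi_/$ on surviving terms, and the remark about possible merging of vertex-pivot equivalent classes) is all sound and only makes explicit what the paper leaves implicit.
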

\begin{lemma}
\label{l:TL}
A contracting set $\C$ of $G$ with respect to
$\H^\p$ contributes a zero term to $T_L(G,e)$ unless it
has type $\mathscr{C}$.
\end{lemma}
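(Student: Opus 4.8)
The plan is to unwind the definition of $T_L(G,e)=T^{\Lambda_0^\p}_{\H,L}(G,e)$ as the image of the universal relative Tutte polynomial $T_{\H^\p}^{\Lambda_0^\p}(G)$ under $\pi_-\circ\pi_L$ (Definition~\ref{def:CL0}), and then to track the effect of the two endomorphisms on a single summand. By~\eqref{eq1}, applied with $\psi=\psi_{\Lambda_0^\p}$, together with the description of $T_{\H^\p}^{\Lambda_0^\p}(G)$ as a $\T(\R,\Lambda^\p\setminus\Lambda_0^\p)$-linear combination of the variables $z_{[\Gamma]}$ (the paragraph following~\eqref{eq:ptr}), one may write
\[
T_{\H^\p}^{\Lambda_0^\p}(G)=\sum_{\C} W(G,c,\phi,\C)\,z_{[\H^\p_\C]},
\]
the sum running over all contracting sets $\C$ of $G$ with respect to $\H^\p$, where $\D=E(G)\setminus(\C\cup\H^\p)$ and $\H^\p_\C$ denotes the graph obtained from $G$ by contracting the edges of $\C$ and deleting the edges of $\D$. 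Each coefficient $W(G,c,\phi,\C)$ lies in $\T(\R,\Lambda^\p\setminus\Lambda_0^\p)$, hence is fixed by $\pi_L$ and $\pi_-$; so it suffices to evaluate $\pi_-\bigl(\pi_L(z_{[\H^\p_\C]})\bigr)$ for a fixed $\C$ and to show that it vanishes unless $(\C,\D)$ has type $\mathscr{C}$.

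First I would apply $\pi_L$. Since $e$ is the only edge of $G$ carrying the color $\nu$ and $e\notin\C\cup\D$, the edge $e$ survives in $\H^\p_\C$ and is its unique edge of color $\nu$; moreover the property of being a loop is invariant under vertex pivots, so the hypothesis in the definition of $\pi_L$ is well posed on the class $[\H^\p_\C]$. By Proposition~\ref{prop:tequiv}, $e$ is a loop in $\H^\p_\C$ if and only if $(\C,\D)$ has type $\mathscr{C}$. Hence $\pi_L(z_{[\H^\p_\C]})=z_{[\H^\p_\C]}$ when $(\C,\D)$ has type $\mathscr{C}$, and $\pi_L(z_{[\H^\p_\C]})=0$ in all other cases; since $\pi_-$ is a homomorphism and $\pi_-(0)=0$, this already gives the asserted vanishing.

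It then only remains to check that $\pi_-$ does not destroy the surviving type-$\mathscr{C}$ terms, so that $T_L$ is genuinely supported on them. If $(\C,\D)$ has type $\mathscr{C}$ then $e$ is a loop of $\H^\p_\C$, hence not a bridge, so $\pi_-$ sends $z_{[\H^\p_\C]}$ to $z_{[\H^\p_\C-e]}$ (and $\H^\p_\C-e$ again has a well-defined vertex pivot class, deleting a loop being a local operation that commutes with vertex pivots). Combining the two steps,
\[
T_L(G,e)=\sum_{\C\ \text{of type}\ \mathscr{C}}W(G,c,\phi,\C)\,z_{[\H^\p_\C-e]},
\]
so every contracting set that is not of type $\mathscr{C}$ contributes the zero term, as claimed. (The outcome is independent of $\phi$ because $\pi_-\circ\pi_L$ is a ring homomorphism and $T_{\H^\p}^{\Lambda_0^\p}(G)$ is labeling-independent in the pointed Tutte ring.)

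The argument is essentially bookkeeping, and I do not expect a real obstacle; the only point that requires a little care is the well-definedness of $\pi_L$ and $\pi_-$ at the level of vertex pivot equivalence classes — that ``$e$ is a loop'' and ``delete the loop $e$'' descend to $\VPE(\Lambda_0^\p)$ — which is exactly what the constructions of Sections~\ref{s2} and~\ref{s3} were arranged to guarantee, together with the trivial observation that a loop is never a bridge, which is what keeps $\pi_-$ from collapsing the relevant terms. The proof of Lemma~\ref{l:TC} is entirely parallel, using instead the type-$\mathscr{D}$ clause of Proposition~\ref{prop:tequiv} and the composite $\pi_/\circ\pi_C$.
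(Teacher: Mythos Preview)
Your proof is correct and is precisely the unwinding of definitions that the paper has in mind; the paper itself offers no separate argument for Lemma~\ref{l:TL}, stating only that it is a direct consequence of the definition of $T_L$. Your extra care about vertex pivot invariance of ``$e$ is a loop'' and the observation that $\pi_-$ does not annihilate the surviving terms are accurate and make explicit what the paper leaves implicit.
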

\begin{lemma}
\label{l:T0}
A contracting set $\C$ of $G$ with respect to
$\H^\p$ contributes a zero term to $T_0(G,e)$ unless it
has type zero.
\end{lemma}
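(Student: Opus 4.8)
The plan is to trace the definition of $T_0(G,e)$ back to the universal relative Tutte polynomial $T_{\H^\p}^{\Lambda_0^\p}(G)$ and to read off, contracting set by contracting set, which terms survive the endomorphism $\pi_0$. First I would expand, via \eqref{eq1} applied to the pair $(G,\H^\p)$ and the universal invariant $\psi_{\Lambda_0^\p}$,
\[
T_{\H^\p}^{\Lambda_0^\p}(G)=\sum_{\C} W(G,c,\phi,\C)\,z_{[\H^\p_\C]},
\]
the sum running over all contracting sets $\C$ of $G$ with respect to $\H^\p$, where $\H^\p_\C$ is the zero-edge graph obtained by contracting $\C$ and deleting $\D$. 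Since, by \eqref{eq:ptr}, $\pi_0$ is an endomorphism of the polynomial ring $\T(\R,\Lambda^\p,\Lambda_0^\p)$ that fixes each weight $W(G,c,\phi,\C)$ (the weights involve only the variables $x_\lambda,X_\lambda,y_\lambda,Y_\lambda$ with $\lambda\in\Lambda$, on which $\pi_0$ restricts to the identity), it distributes over the sum, so that
\[
T_0(G,e)=\pi_0\bigl(T_{\H^\p}^{\Lambda_0^\p}(G)\bigr)=\sum_{\C} W(G,c,\phi,\C)\,\pi_0\bigl(z_{[\H^\p_\C]}\bigr).
\]
Hence the $\C$-term is zero exactly when $\pi_0(z_{[\H^\p_\C]})=0$.

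Next I would observe that $\H^\p_\C$ always has exactly one edge of color $\nu$: the edge $e$ is a zero edge of $\H^\p$, so it is neither contracted nor deleted and therefore survives in $\H^\p_\C$; and $e$ is the only edge of $G$ colored $\nu$, since the remaining zero edges lie in $\H$ and carry colors from $\Lambda_0$, the regular edges carry colors from $\Lambda\setminus\Lambda_0$, and $\nu\notin\Lambda$. Consequently, inspecting the definition of $\pi_0$, the value $\pi_0(z_{[\H^\p_\C]})$ equals $z_{[\H^\p_\C]}$ if this unique $\nu$-colored edge $e$ is neither a loop nor a bridge of $\H^\p_\C$, and equals $0$ otherwise.

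Finally I would invoke Proposition~\ref{prop:tequiv}: after contracting the edges of $\C$ and deleting the edges of $\D$ in $G$, the edge $e$ is neither a loop nor a bridge precisely when $(\C,\D)$ — equivalently $\C$ — has type zero. Combining this with the previous paragraph yields that the $\C$-term of $T_0(G,e)$ vanishes unless $\C$ has type zero, which is the claim. I do not expect a genuine obstacle here; the only points needing a sentence of care are that $\pi_0$ really acts coefficientwise on the displayed sum (because every weight lies in the coefficient ring fixed by $\pi_0$) and that the loop/bridge condition on $e$ inside $\H^\p_\C$ used in the definition of $\pi_0$ is exactly the one matched by Proposition~\ref{prop:tequiv} once one knows $e$ is the unique $\nu$-edge.
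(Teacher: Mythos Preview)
Your proof is correct and is exactly the unpacking that the paper has in mind: the paper does not give a separate argument for Lemma~\ref{l:T0} but simply says that it (together with Lemmas~\ref{l:TC} and~\ref{l:TL}) is a direct consequence of the definitions. Your expansion of $T_{\H^\p}^{\Lambda_0^\p}(G)$ via~\eqref{eq1}, the observation that $\pi_0$ acts trivially on the weight coefficients and hence only on the factors $z_{[\H^\p_\C]}$, and the appeal to Proposition~\ref{prop:tequiv} to match the loop/bridge status of the unique $\nu$-colored edge with the type of $(\C,\D)$, are precisely the steps implicit in that remark.
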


Next we define the pointed universal relative Tutte polynomials which
will assume the roles played by $T(G/e)$ and by $T(G-e)$ respectively
in~\cite{DGH1,DHCl}.

\begin{definition}
\label{def:T/} We define the pointed universal relative Tutte polynomials
$T^{\Lambda_0^\p}_{\H,/}(G,e)$ and $T^{\Lambda_0^\p}_{\H,-}(G,e)$,
respectively, as $T^{\Lambda_0}_{\H}(G/e)-\pi_/T^{\Lambda_0^\p}_{\H,0}(G,e)$ and
$T^{\Lambda_0}_{\H}(G-e)-\pi_-T^{\Lambda_0^\p}_{\H,0}(G,e)$, respectively.
\end{definition}

\begin{remark}\label{r4.12} {\em Notice that although the type zero contracting sets of $G-e$ are exactly those type zero contracting sets that make non-zero contributions in $\pi_-T^{\Lambda_0^\p}_{\H,0}(G,e)$,  they may not make the same
contributions in $T^{\Lambda_0}_{\H}(G-e)$ and $\pi_-T^{\Lambda_0^\p}_{\H,0}(G,e)$. The reason is that in $T^{\Lambda_0}_{\H}(G-e)$,
the edge $e$ is removed first while in
$\pi_/T^{\Lambda_0^\p}_{\H,0}(G,e)$ the edge $e$ is removed last. This
means that the contributions of the type zero contracting sets to
$T^{\Lambda_0}_{\H}(G-e)$ may not cancel with the contributions of the
corresponding type zero contracting sets to
$\pi_/T^{\Lambda_0^\p}_{\H,0}(G,e)$. In general,
$T^{\Lambda_0^\p}_{\H,-}(G,e)$ may even contain negative terms. The graph on the left side of Figure \ref{T_example} shows such an example. We will leave it to our reader to verify that $T^{\Lambda_0}_{\H}(G-e)=X_\mu z_{[\Gamma_b]}$ and $\pi_-T^{\Lambda_0^\p}_{\H,0}(G,e)=x_\mu z_{[\Gamma_b]}$, here $\Gamma_b$ is the graph that consists of a single zero edge that is a bridge. Thus $T^{\Lambda_0^\p}_{\H,-}(G,e)=(X_\mu-x_\mu) z_{[\Gamma_b]}$.
The situation for $T^{\Lambda_0^\p}_{\H,/}(G,e)$ is similar. For the graph $G$ shown on the right side of Figure \ref{T_example}, we have
$T^{\Lambda_0}_{\H}(G/e)=Y_\mu z_{[\Gamma_l]}$ while $\pi_/T^{\Lambda_0^\p}_{\H,0}(G,e)=y_\mu z_{[\Gamma_l]}$, where $\Gamma_b$ is the graph that consists of a single zero loop edge. Thus  $T^{\Lambda_0^\p}_{\H,/}(G,e)=(Y_\mu-y_\mu) z_{[\Gamma_l]}$.}
\end{remark}
\begin{figure}[htbp]
\begin{center}
\input{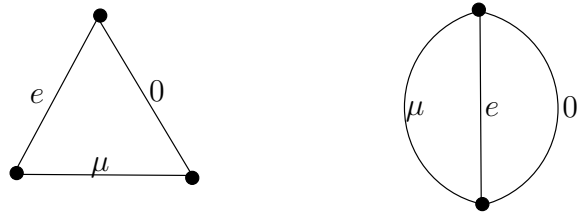}
\caption{Simple examples of graphs $G$ with the property that $T^{\Lambda_0^\p}_{\H,-}(G,e)$ (left) or $T^{\Lambda_0^\p}_{\H,/}(G,e)$ (right) contains negative terms.}\label{T_example}
\end{center}
\end{figure}

Of course, in the case when $\H=\emptyset$, we have
$T^{\Lambda_0^\p}_{\H,/}(G,e)=T(G/e)\cdot
z_{[\bullet]}$ and $T^{\Lambda_0}_{\H,-}(G,e)=T(G-e)\cdot
z_{[\bullet]}$. Again, in situations where there will be no confusion
about the sets $\H$, $\Lambda$ and $\Lambda_0$, we will use $T_/(G,e)$
and $T_-(G,e)$ as the abbreviations for $T^{\Lambda_0^\p}_{\H,/}(G,e)$
and $T^{\Lambda_0^\p}_{\H,-}(G,e)$ respectively.

We conclude this section with a generalization of \cite[Theorem 2]{DHCl}
and of its consequences. We will need this result to justify why the {\em
 regular Brylawski homomorphism}, to be introduced in Section~\ref{sec:tp}, is
well-defined.

\begin{theorem}\label{thm:pointed}
The pointed universal $\Lambda_0$-colored  relative Tutte
polynomials $T_C(G,e)$,
$T_L(G,e)$, $T_/(G,e)$, and
$T_-(G,e)$
satisfy the following two identities for
any $\mu\in \Lambda$:
\begin{eqnarray}
x_{\mu}\left(T_/(G,e)-
T_C(G,e)\right)
&=&
(Y_\mu-y_\mu) T_L(G,e),\label{TLC1}\\
y_\mu\left(T_-(G,e) -
T_L(G,e)\right)
&=& (X_\mu-x_\mu) T_C(G,e).\label{TLC2}
\end{eqnarray}
\end{theorem}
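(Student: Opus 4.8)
\medskip
\noindent\textit{Proof idea.}
The plan is to prove the two identities from two auxiliary graph constructions: the first construction yields a third identity \eqref{plan:star} below, and \eqref{TLC2} then follows from \eqref{TLC1} and \eqref{plan:star} by elementary algebra. I abbreviate the five pointed polynomials of $(G,e)$ by $T_C,T_L,T_0,T_/,T_-$, and recall from Definition~\ref{def:T/} that $T_/=T^{\Lambda_0}_{\H}(G/e)-\pi_{/}T_0$ and $T_-=T^{\Lambda_0}_{\H}(G-e)-\pi_{-}T_0$.

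First I would establish the auxiliary identity
\begin{equation}\label{plan:star}
x_\mu T_/ + y_\mu T_- \;=\; X_\mu T_C + Y_\mu T_L .
\end{equation}
Let $\widehat G$ be the graph obtained from $G$ by recoloring the distinguished edge $e$ with a regular color $\mu\in\Lambda\setminus\Lambda_0$, so that $\H$ (rather than $\H^\p$) is the zero set of $\widehat G$. Since $e$ is neither a loop nor a bridge, Corollary~\ref{Cor2.7} gives $T^{\Lambda_0}_{\H}(\widehat G)=y_\mu T^{\Lambda_0}_{\H}(G-e)+x_\mu T^{\Lambda_0}_{\H}(G/e)$. On the other hand I would expand $T^{\Lambda_0}_{\H}(\widehat G)$ over its contracting sets using a proper labeling in which $e$ receives the \emph{smallest} label. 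By Remark~\ref{rem:types}, the contracting sets of $\widehat G$ with respect to $\H$ are exactly the sets $\C$ (with $e$ in the deleting set) for $\C$ a contracting set of $G$ with respect to $\H^\p$ of type $\mathscr{C}$ or zero, together with the sets $\C\cup\{e\}$ (with $e$ in the contracting set) for $\C$ of type $\mathscr{D}$ or zero. Because $e$ carries the smallest label, it is never among the edges contracted or deleted while the activity of another edge is computed, so every edge $g\ne e$ has in $\widehat G$ the same weight it has in $G$ with respect to $\H^\p$, and $\widehat\H_{\widehat\C}$ equals $\H^\p_\C-e$ or $\H^\p_\C/e$ according to whether $e$ lies in the deleting or the contracting set, where $\H^\p_\C$ is the graph obtained when $e$ is still present as a zero edge. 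The weight of $e$ itself is read off from Proposition~\ref{prop:tequiv}: it equals $Y_\mu$, $y_\mu$, $X_\mu$, $x_\mu$ in the cases (type $\mathscr{C}$, $e$ deleted), (type zero, $e$ deleted), (type $\mathscr{D}$, $e$ contracted), (type zero, $e$ contracted), respectively. Sorting the terms by type and using the definitions of $T_C$, $T_L$, $T_0$ and of the maps $\pi_{/}$, $\pi_{-}$ one obtains $T^{\Lambda_0}_{\H}(\widehat G)=X_\mu T_C + Y_\mu T_L + x_\mu\pi_{/}T_0 + y_\mu\pi_{-}T_0$; equating the two expressions for $T^{\Lambda_0}_{\H}(\widehat G)$ and inserting the definitions of $T_/$ and $T_-$ gives \eqref{plan:star}.

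Next I would prove \eqref{TLC1}, that is $x_\mu(T_/-T_C)=(Y_\mu-y_\mu)T_L$. Let $G^p$ be obtained from $G$ by adjoining a new edge $f$ of color $\mu$ parallel to $e$, keeping $e$ as the distinguished zero edge. Since $f$ is neither a loop nor a bridge, Corollary~\ref{Cor2.7} applied to the universal $\Lambda_0^\p$-colored polynomial gives $T^{\Lambda_0^\p}_{\H^\p}(G^p)=y_\mu T^{\Lambda_0^\p}_{\H^\p}(G^p-f)+x_\mu T^{\Lambda_0^\p}_{\H^\p}(G^p/f)$, where $G^p-f=G$ while $G^p/f$ is $G/e$ together with an additional zero loop $e$. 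Applying the endomorphism $\pi_{-}\circ\pi_{L}$ (the one defining $T_L$): the left side becomes $T_L(G^p,e)$; the first term on the right becomes $y_\mu T_L$; and the second term becomes $x_\mu T^{\Lambda_0}_{\H}(G/e)$, since on $G/e$-with-zero-loop the unique $\nu$-colored edge is the loop $e$, so $\pi_{L}$ retains every term and $\pi_{-}$ then deletes that loop. I would then also expand $T_L(G^p,e)$ directly over the contracting sets of $G^p$ with respect to $\H^\p$: each such set is $\C_0$ (with $f$ deleted) or $\C_0\cup\{f\}$ (with $f$ contracted) for $\C_0$ a contracting set of $G$ with respect to $\H^\p$; because $\{e,f\}$ is a cycle, $\C_0$ has type $\mathscr{C}$ in $G^p$ exactly when it does in $G$, whereas every $\C_0\cup\{f\}$ has type $\mathscr{C}$ in $G^p$ and is admissible exactly when $\C_0$ has type $\mathscr{D}$ or zero in $G$. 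As the zero edge $e$ is always present, the parallel edge $f$ is redundant for every cycle and cocycle among the remaining edges, so with $f$ given the smallest label those edges keep their weights; the weight of $f$ is $Y_\mu$ when it is deleted (then $\C_0$ has type $\mathscr{C}$, so $f$ closes a cycle with $\C_0$) and $x_\mu$ when it is contracted ($f$ can never be a bridge). Translating the graphs $\H^\p_\C$ back to $G$ --- deleting or contracting $f$ recovers the corresponding graph of $G$, up to the extra zero loop $e$, which $\pi_{-}$ removes --- one gets $T_L(G^p,e)=Y_\mu T_L + x_\mu(T_C+\pi_{/}T_0)$. Comparing the two expressions for $T_L(G^p,e)$ and using $T_/=T^{\Lambda_0}_{\H}(G/e)-\pi_{/}T_0$ yields \eqref{TLC1}. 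Finally \eqref{TLC2} follows at once: substituting $x_\mu T_/=x_\mu T_C+(Y_\mu-y_\mu)T_L$ from \eqref{TLC1} into \eqref{plan:star} and cancelling leaves $y_\mu(T_--T_L)=(X_\mu-x_\mu)T_C$. (Symmetrically, \eqref{TLC2} can be proved directly by subdividing $e$ into two edges in series and applying $\pi_{/}\circ\pi_{C}$.)

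I expect the main obstacle to be the two pieces of combinatorial bookkeeping, rather than the algebra. For \eqref{plan:star} one must check carefully that putting $e$ last in the labeling makes the activities of all other edges of $\widehat G$ coincide with their activities in $G$ relative to $\H^\p$, so that the type classification of Propositions~\ref{prop:typeC}--\ref{prop:type0} and Remark~\ref{rem:types} simultaneously governs which contracting sets occur and what the weight of $e$ is; for \eqref{TLC1} one must likewise justify that adjoining a parallel copy of the always-present zero edge $e$ changes no cycle or cocycle among the other edges, and must track how $\pi_{L},\pi_{-},\pi_{C},\pi_{/}$ interact with the deletion and contraction of $f$ and with the attachment of a zero loop --- in particular that $\pi_{C}$ annihilates any graph whose unique $\nu$-colored edge is a loop, which is what makes the $\pi_{/}\pi_{C}$-image of $T^{\Lambda_0^\p}_{\H^\p}(G^p/f)$ vanish. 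As Remark~\ref{r4.12} warns, the type-zero contributions do \emph{not} cancel between $T^{\Lambda_0}_{\H}(G/e)$ and $\pi_{/}T_0$ (nor between $T^{\Lambda_0}_{\H}(G-e)$ and $\pi_{-}T_0$), which is precisely why $\pi_{/}T_0$ and $\pi_{-}T_0$ must be carried through both computations rather than dropped.
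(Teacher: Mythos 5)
Your proposal is correct, and its interesting half is genuinely different from the paper's. The paper proves \eqref{TLC1} directly and combinatorially: it restricts attention to type~$\mathscr{D}$ contracting sets, introduces \emph{special pairs} $(\C,f)$, sets up a bijection between type~$\mathscr{C}$ contracting sets and special pairs, tracks how the activities of the edges $f_1,\dots,f_k$ and of all other edges change when $e$ is contracted first versus last, and finishes with the algebraic identity $x_{\mu}\bigl(\prod_i Y_{\mu_i}-\prod_i y_{\mu_i}\bigr)=(Y_{\mu}-y_{\mu})\sum_i x_{\mu_i}\prod_{j<i}Y_{\mu_j}\prod_{j>i}y_{\mu_j}$ from \cite[Lemma 3]{DHCl}; only afterwards does it prove the sum identity $x_\mu T_/+y_\mu T_-=X_\mu T_C+Y_\mu T_L$ (your \eqref{plan:star}) by recoloring $e$ with $\mu$ and computing the resulting relative Tutte polynomial in two ways, deducing \eqref{TLC2}. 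You prove the sum identity exactly as the paper does, but you replace the paper's long special-pair argument for \eqref{TLC1} by a second two-way computation on the auxiliary graph $G^p$ obtained by adjoining a $\mu$-colored edge $f$ parallel to $e$: applying $\pi_-\circ\pi_L$ to the deletion--contraction recursion at $f$ gives $y_\mu T_L+x_\mu T_\H^{\Lambda_0}(G/e)$, while the direct expansion with $f$ labeled smallest gives $Y_\mu T_L+x_\mu\bigl(T_C+\pi_/T_0\bigr)$, and \eqref{TLC2} then follows by the algebra you indicate. The bookkeeping you flag does go through: since the zero edge $e$ is never deleted, every cocycle of $G^p$ through $f$ must contain $e$ (so $f$ is never internally active and cocycles among the other regular edges are unaffected), and any cycle through $f$ contains the smallest label, so all other edges keep their $G$-activities; likewise $\pi_L$ retains, and $\pi_-$ then strips, the zero loop created in $G^p/f$ and in the type~$\mathscr{D}$/zero terminal graphs, so the translation to $T_C+\pi_/T_0$ and to $T_\H^{\Lambda_0}(G/e)$ is exact up to vertex pivot equivalence. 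What your route buys is a substantially shorter and more uniform proof: both identities come from the same ``compute one auxiliary relative Tutte polynomial in two ways'' device, no bijection between contracting sets and no appeal to \cite[Lemma 3]{DHCl} is needed, and, as you note, subdividing $e$ gives a direct, duality-free proof of \eqref{TLC2} as well --- which is precisely the kind of argument the paper says it wanted in order to avoid matroid duality in the presence of zero edges. What the paper's longer argument buys is finer local information (the explicit special-pair correspondence and the term-by-term matching of weights), which is in the spirit of the tensor-product bookkeeping reused in Section~\ref{sec:tp}; your proof, by contrast, needs the auxiliary graph $G^p$, so one must check (as you do) that $e$ being neither a loop nor a bridge persists there and that the endomorphisms commute appropriately with the recursion.
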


\begin{proof} \eqref{TLC1} is proved in a way that is analogous to the establishment of equation (5) in
the proof of \cite[Theorem 2]{DHCl}. Because of the presence of the zero edges, the proof is harder here and we choose to provide a detailed proof for our reader. By the definition of  $T_/(G,e)$, what we need to prove is
\begin{equation}\label{first}
x_{\mu}\left(T(G/e)-\pi_/T_{0}(G,e)-
T_C(G,e)\right)
=(Y_\mu-y_\mu) T_L(G,e).
\end{equation}
Notice that in order to compute each of the three polynomials on the
left side of (\ref{first}), we only need to consider type $\mathscr{D}$
contracting sets with respect to $\H'$. For the calculation of $T_C(G,e)$ this
 observation is stated in  Lemma~\ref{l:TC}. To calculate $T(G/e)$ and
$\pi_/T_{0}(G,e)$ we need to sum over contracting sets $\C$ of $G$ with
 respect to $\H^\prime$ that have the property that contracting all
 edges of $\C$ does not turn $e$ into a loop. By
the converse of Proposition~\ref{prop:typeD}, stated in
Remark~\ref{rem:types}, these are exactly the type $\mathscr{D}$ contracting
sets.

Let $\C$ be a type $\D$ contracting set and let
$f\in \D$ be a regular edge in the corresponding deleting set.
Let us call the pair $(\C,f)$ a {\em special pair} if $\C\cup\{e,f\}$
contains a cycle $C(\C,f)$ containing $e$ and $f$ has the smallest
label in $C(\C,f)\setminus\{e\}$. Observe that the cycle $C(\C,f)$
is unique since $\C\cup\{e\}$ contains no cycle
(equivalently, $\C$ is a contracting set of $G/e$ with respect to $\H$),
thus $\C\cup \{f\}$ contains at most one cycle. Furthermore $f$ is
externally active in $G/e$ exactly if it belongs to a special pair
$(\C,f)$.

Let us consider a special pair $(\C,f)$. As seen in \cite[Lemma
  3.7]{DHRel}, $\C^\p=\C\cup\{f\}$ is a contracting set
of $G$ that does not contain $e$. Thus $\C^\p$ is also a contracting set
of $G-e$ and, by the external activity of $f$ with respect to
$\C$ in $G/e$, the edge $f$ is the
element on the cycle $C(\C,f)$ with the smallest label, so it will be
internally active in $G- e$ but internally inactive in the
computation of $T_L(G,e)$ (since in the latter case $e$ is deleted
last). By the converse of Proposition~\ref{prop:typeC}, stated in
 Remark~\ref{rem:types}, $\C^{\prime}$ has type $\mathscr{C}$.

Conversely, let $\C^\p$ be a type $\mathscr{C}$ contracting set of $G$ with respect to
$\H^\prime$. By Proposition~\ref{prop:typeD}, the set
$\C^\p$ is a contracting set of $G-e$ with respect to $\H$ and $e$
closes a cycle (denoted by $C(\C^\p,e)$) with some edges from
$\C^\p$. Again, the cycle $C(\C^\p,e)$ is unique. Let $f$ be the
element in the set $\C^\p\cap C(\C^\p,e)$ with the smallest label.
We may use \cite[Lemma 3.7]{DHRel} again to see that
$\C^\p\setminus \{f\}\cup \{e\}$ is a contracting set of $G$ with
respect to $\H$ containing $e$ and it is easy to check that
$\C:=\C^\p\setminus \{f\}$ forms a special pair $(\C,f)$ with $f$.
We thus obtain a bijection between the type $\mathscr{C}$ contracting sets $\C^\p$ of
$G\setminus e$ and the special pairs $(\C,f)$ of $G$.

Let $\C$ be a type $\mathscr{D}$ contracting set of $G$ with respect to
$\H^\prime$ such that there is no edge $f$ in the corresponding deleting set
with the property that $(\C,f)$ is a special pair. By Proposition~\ref{prop:typeD}, the
set $\C$ is also a contracting set of $G/e$ with respect to $\H$. Also, contracting $e$
first or last does not affect the activities of the edges in $\C$ and $\D$.
If we contract all edges in $\C$ and
delete all edges in $\D$ first, $e$ will not be a loop in the resulting graph
$\Gamma(\C,e)$.  If $e$ is a bridge in $\Gamma(\C,e)$, then $\C$ makes
a contribution in $T_C(G,e)$ (after $e$ is contracted in
$\Gamma(\C,e)$). If $e$ is not a bridge in $\Gamma(\C,e)$, then it forms
a cycle with some zero edges in $\Gamma(\C,e)$. In this case $\C$ makes
a contribution to
$\pi_{/}T_{0}(G,e)$ after $e$ is contracted in $\Gamma(\C,e)$. So, in
the case that there are no special pairs $(\C,f)$, then either all edges
make the same contributions to $T(G/e)$ and $T_C(G,e)$ (if $e$ is a
bridge in $\Gamma(\C,e)$), or all edges make the same contributions to
$T(G/e)$ and $\pi_{/}T_{0}(G,e)$ (if $e$ is a not bridge in
$\Gamma(\C,e)$). Thus for all contracting sets $\C$ that do not form any
special pairs with edges from their corresponding deleting sets $\D$,
their total contribution to $T(G/e)-T_{0,/}(G,e)-
T_C(G,e)$ is zero.

Now assume that $\C$ is a type $\mathscr{D}$ contracting set of $G$ with respect to
$\H^\prime$ that forms  special pairs with some edges from $\D$. Without
loss of generality, assume that $f_1,\ldots,f_k\in \D$ are all the edges
that form special pairs with $\C$ and that they have been listed in the
increasing order according to their labels. Furthermore, let us assume
that the color of $f_i$ is $\mu_i$. Each $f_i$ is externally active in
$G/e$ hence their total contribution to $T(G/e)$ is $\prod_{i=1}^k
Y_{\mu_i}$. However, in the computation of $T_C(G,e)$ or $\pi_/T_{0}(G,e)$,
$e$ is to be contracted last (hence it has the smallest label) so the
total contribution of $f_1$, $f_2$, \ldots , $f_k$ to $T_C(G,e)$ or to
$T_{0,/}(G,e)$ is $\prod_{i=1}^k y_{\mu_i}$. All other edges have the
same contributions to both polynomials, since their activities are the
same, whether $e$ is contracted first or last. Thus the combined contribution of
$\C$ and $\D$ to the left hand side of (\ref{first}) is
$x_{\mu}\left(\prod_{i=1}^k Y_{\mu_i} -\prod_{i=1}^k y_{\mu_i}
\right)z_{[\Gamma(\C)]}$ times the product of the weights of the
edges that are different from $\{f_1,\ldots,f_k\}$, where $\Gamma(\C)$
is the graph of zero edges obtained after contracting the edges in
$\C\cup\{e\}$ and deleting the edges in the corresponding deleting set
$\D$. The term $(\prod_{i=1}^k
y_{\mu_i})z_{[\Gamma(\C)]}$ appears either in $T_C(G,e)$ or in
$\pi_{/}T_{0}(G,e)$, but it makes no difference in our argument. By
\cite[Lemma 3]{DHCl},
$$
x_{\mu}\left(\prod_{i=1}^k Y_{\mu_i}
-\prod_{i=1}^k y_{\mu_i} \right)=(Y_{\mu}-y_{\mu}) \sum_{i=1}^k x_{\mu_i}
\prod_{j=1}^{i-1} Y_{\mu_j} \prod_{j=i+1}^k
y_{\mu_j}.
$$
Thus it is sufficient to prove the following:
\begin{enumerate}
\item The graph $\Gamma(\C_i)-e$, obtained by contracting edges in
  $\C_i$ and deleting $e$ and the edges in $\D$, is (vertex pivot
  equivalent to) $\Gamma(\C,e)/e$.
\item The product of the weights of the edges $f_1,\ldots,f_k$ in the
contribution of $\C_i=\C\cup \{f_i\}$ to $T_L(G,e)$ is
$x_{\mu_i}\prod_{j=1}^{i-1}
Y_{\mu_j}\prod_{j=i+1}^k y_{\mu_j}$.
\item The weight of any edge $f\not\in\{e,f_1,\ldots,f_k\}$ is the same
in the contribution of $\C$ to $T(G/e)$ as in the contribution of
any $\C_i=\C\cup\{f_i\}$ to $T_L(G,e)$.
\end{enumerate}

The first statement above is true since $f_i$ and $e$ belong to the same cycle in which the other edges are all from $\C$. Thus deleting any one of the edges in the cycle and contracting the rest has the same effect. The vertices of all the edges involved become one single vertex.

Since $f_i\in\C_i$ and it belongs to $C(\C_i,e)$, it contributes a factor of
$x_{\mu_i}$ to $T_L(G,e)$ by the exception rule (since $e$ is deleted last, $f_i$ can never become a bridge). Since $C(\C_i,e)$ and $C(\C_j,e)$ both contain $e$, one can show that for any $i\not=j$, there exist a unique
cycle $C(\C_i,\C_j)$, consisting of $f_i$, $f_j$ and edges from
$C(\C_i,e)\cup C(\C_j,e)$, but not $e$ (see
\cite[Lemma 2]{DHCl}). Furthermore, if $i<j$, then the label of $f_i$ is
smaller than those of the other edges in this cycle. Since $f_j\in \D_i$
and $j>i$, $f_j$ has a larger label (recall that the labels of $f_1$,
$f_2$, \ldots , $f_k$ are in increasing order by our choice), it is
externally inactive. But if $j<i$, then $f_j$ has the smallest label
among the edges in $C(\C_i,\C_j)$ so it is externally active. Thus the
product of the weights of the edges $f_1,\ldots,f_k$ in the
contribution of $\C_i=\{f_i\}\cup \C\setminus\{e\}$ to $T_L(G,e)$ is
$x_{\mu_i}\prod_{j=1}^{i-1} Y_{\mu_j}\prod_{j=i+1}^k y_{\mu_j}$. This
proves the second statement above.

To prove the third statement, observe that a regular edge
$f\not\in\{e,f_1,\ldots,f_k\}$ either belongs to $\C$, in which case it would belong
to all contracting sets $\C_i$, or it belongs to $\D$, in which case it belongs to $\D_i$ for
each $i$, where $\D_i$ is the deleting set corresponding to $\C_i$.
Consider first the case $f\in \C$, i.e., $f\in \C_i$ for all $i$. For
each $i$ we have either $f\in C(\C_i,e)$ or $f\not\in C(\C_i,e)$. If
$f\in C(\C_i,e)$ then $f$ is internally inactive with
respect to $\C$ since $f$ has a label larger than that of $f_i$. It is
also internally inactive with respect to $\C_i$ in the computation of
$T_L(G,e)$ since $e$ is considered as an edge in the corresponding
deleting set and it has the smallest label (among all edges).
If $f\not\in C(\C_i,e)$ then its activity is the same with
respect to $\C$ or with respect to $\C_i$, since its activity is
determined by comparing its label with the labels of edges in the
deleting set that are on cycles containing $f$, yet $e$ and $f_i$ are
not on such cycles.  Thus a regular edge
$f\in\C\setminus\{e,f_1,\ldots,f_k\}$ has the same weight in the contribution of
$\C$ to $T(G/e)$ and in the contribution of $\C_i$ to $T_L(G,e)$.
In the second case, $f\in \D$, hence $f\in \D_i$ holds for all
$i$. Here $\D$, respectively $D_i$ is the deleting set corresponding to
$\C$, respectively $\C_i$. In this case $f$ either does not close any
cycle with edges from $\C\cup\{e\}$ that contains $e$, or it closes such a cycle
but it does not have the smallest label compared to other edges from
$\C$ on this cycle (since it is not one of
the $f_j$'s). If $f$ does not close any cycle with edges from
$\C\cup\{e\}$ that contains $e$, then $f$ will not close any cycle with
edges from $\C_i$ that also contains $f_i$, hence the determination of
its activity does not involve $e$ or  $f_i$, and it has the same
activity with respect to $\C$ and with respect to $\C_i$. Assume finally
$f$ closes a cycle $C(\C,f)$ with some edges from $\C\cup\{e\}$ and $e$
is on this cycle but $f$ does not have the smallest label among the
edges on this cycle. Then $f$ is externally inactive with respect to
$\C$. In this case $f$ also closes a (unique) cycle with edges from $\C_i$ that
contains $f_i$. Denote this cycle $C(\C_i,f)$. Let $g\in C(\C,f)$ be an
edge with label smaller than that of $f$. Then one can show that either
$g\in C(\C_i,f)$ or $g\in C(\C,f_i)$. If  $g\in C(\C_i,f)$, then $f$ is
externally inactive since $g$ has a smaller label. If $g\in C(\C,f_i)$,
then the label of $f_i$ is smaller than that of $f$ since $f_i$ has the
smallest label among the edges of $C(\C,f_i)$ (which contains $g$). So
$f$ is again externally inactive. To summarize, in all cases, $f$ has
the same weight in (the contribution of $\C$ to) $T(G/e)$ and in (the
contribution of $\C_i$ to) $T_L(G,e)$.

Equation \eqref{TLC2} is a direct generalization of
equation (6) in \cite[Theorem 2]{DHCl}. In \cite{DHCl} we invoked
matroid duality to derive this equation from the preceding one. We want
to avoid doing so this time since the presence of zero edges makes
questions of duality less clear, and since, in an effort to state our results in
a language that is more directly applicable in knot theory, we avoided
stating the matroid theoretic generalizations. Fortunately there is
another easy way to show \eqref{TLC2} after having shown \eqref{TLC1}:
it suffices to prove the validity of the {\em sum} of the two equations,
which is equivalent to
$$
x_{\mu}T_{/}(G,e) +y_\mu T_{-}(G,e)
=
X_\mu T_{C}(G,e)
+ Y_\mu T_{L}(G,e) .
$$
After adding
$x_{\mu}(T_{\H}^{\Lambda_0}(G/e)-T_{/}(G,e) )+
y_{\mu}(T_{\H}^{\Lambda_0}(G-e)-T_{-}(G,e) )$
to both sides, we obtain the equivalent equation
\begin{eqnarray}
&&x_{\mu} T_{\H}^{\Lambda_0}(G/e)+y_\mu T_{\H}^{\Lambda_0}(G-e)\nonumber\\
&=&
X_\mu T_{C}(G,e)
+x_{\mu}(T_{\H}^{\Lambda_0}(G/e)-T_{/}(G,e))\nonumber\\
&+& Y_\mu T_{L}(G,e) +
y_{\mu}(T_{\H}^{\Lambda_0}(G-e)-T_{-}(G,e) )\nonumber\\
&=&
X_\mu T_{C}(G,e)
+x_{\mu}\pi_{/}T_{0}(G,e)+Y_\mu T_{L}(G,e) +
y_{\mu}\pi_{-}T_{0}(G,e).\label{e48}
\end{eqnarray}
Let $G^\p$ be the colored graph that is identical to $G$, except that
the edge $e$ is colored with color $\mu$ instead of $\nu$. Let us
consider $T_{\H}^{\Lambda_0}(G^\p)$, whose definition is labeling
independent. We will show that both sides of (\ref{e48}) equal to
$T_{\H}^{\Lambda_0}(G^\p)$. First, by Corollary \ref{Cor2.7},
$T_{\H}^{\Lambda_0}(G^\p)=x_{\mu} T_{\H}^{\Lambda_0}(G/e)+y_\mu
T_{\H}^{\Lambda_0}(G-e)$ if we contract and delete $e$ first, since $e$
is neither a bridge nor a loop. That is, the left side of (\ref{e48}) is
equal to $T_{\H}^{\Lambda_0}(G^\p)$.
Next, let us now select any proper labeling such that the
label of $e$ is the smallest, so $e$ will be the last edge to be
contracted and/or deleted for each given contracting set $\C$. For each
type $\mathscr{C}$ contracting set $\C$ with respect to $\H^\p$, $e$ becomes a
loop after all edges of $\C$ have been contracted, hence it will
contribute a $Y_\mu$ term at the end. By Lemma \ref{l:TL}, the
collection of all such contracting sets are exactly those that make
non-zero contributions to $T_{L}(G,e) $, thus the
combined contributions of all such contracting sets yield $Y_\mu
T_{C}(G,e) $. Similarly, the combined contributions of
all type $\mathscr{D}$ contracting sets $\C$ of $G$ with respect to $\H^\p$ yield
exactly $X_\mu T_{L}(G,e) $ by Lemma
\ref{l:TC}. Finally, for each type zero contracting set $\C$, the edge $e$
becomes neither a bridge nor a loop, after all edges in
$\C$ have been contracted and all edges in $\D$ have been deleted.
In this case $\C$ contributes a term to $T_0(G,e)$. If, in the last
step, $e$ is contracted, we obtain a
term $x_\mu$ and $\C$ makes a contribution to $\pi_{/}T_{0}(G,e)$
by the definition of $\pi_/T_{0}(G,e)$. Similarly, if $e$ is deleted in
the last step, we get the expected $y_\mu$ term and $\C$ makes a
non-zero contribution to $\pi_{-}T_{0}(G,e)$. Combining the above, we
see that the right side of (\ref{e48}) is also equal to
$T_{\H}^{\Lambda_0}(G^\p)$, hence establishing the equality of
(\ref{e48}).
\end{proof}

In analogy to equations (8) and (9) in \cite{DHCl}, equations
\eqref{TLC1} and \eqref{TLC2} may be restated as
\begin{equation}\label{tctnmat1}
\det\left(\begin{array}{ll}
T_{L}(G,e) & T_{C}(G,e) \\
x_{\lambda} & y_{\lambda}
\end{array}\right)
= \det\left(\begin{array}{ll}
T_{L}(G,e) & T_{/}(G,e) \\
x_{\lambda}& Y_{\lambda}
\end{array}\right)
\end{equation}
and
\begin{equation}\label{tctnmat2}
\det\left(\begin{array}{ll}
T_{L}(G,e) ) & T_{C}(G,e) \\
x_{\lambda}& y_{\lambda}
\end{array}\right)
= \det\left(\begin{array}{ll}
T_{-}(G,e) &
T_{C}(G,e) \\
X_{\lambda}& y_{\lambda}
\end{array}\right).
\end{equation}
\begin{remark}{\em
The analogue of Theorem~\ref{DHRel} in \cite{DGH1} (and in \cite{DHCl})
is used to prove that the definition of the pointed Tutte polynomials
$T_C(G,e)$ and $T_L(G,e)$ is independent of the labeling, see \cite[Corollary
  2]{DHCl}. This time we do not prove labeling-independence of our
pointed relative Tutte polynomials, since it is obvious from the
definition. Note that this also applies to the special case when
$\H=\emptyset$. Thus the labeling independence of the polynomials
$T_C(G,e)$ and $T_L(G,e)$ defined in \cite{DGH1,DHCl} is
also a consequence of the labeling independence of the relative Tutte
polynomial shown in \cite{DHRel}.}
\end{remark}

\section{Contracting sets in a tensor product of graphs having zero edges}
\label{sec:cs}

A crucial idea behind proving the main results \cite[Theorem 5.1]{DGH1} and
\cite[Theorem 3]{DHCl}, providing a formula for the Tutte polynomial
of a tensor product $G_1\otimes_{\lambda} G_2$ of a colored connected graph
$G_1$ with a pointed colored connected graph $G_2$ (with distinguished
edge $e$) was to understand the composite  structure of a spanning tree
of $G_1\otimes_{\lambda} G_2$ in terms of considering an induced spanning tree
of $G_1$ and a collection of spanning trees of $G_2-e$ and $G_2/e$.
In this section we generalize this description to understanding the
composite structure of a contracting set and of the corresponding
deleting set in a tensor product
$G_1\otimes_{\lambda} G_2$ where both graphs may have zero edges.

Let $\Lambda$ be a color set, $\nu\not \in \Lambda$ is a distinguished color and $\Lambda^\p=\Lambda\cup\{\nu\}$.
From now on we assume that $G_1$
is a $\Lambda$-colored graph, together with a set of zero edges
$\H_1\subset E(G_1)$ which form a $\Lambda_0$-colored subgraph for some
$\Lambda_0\subset \Lambda$. Assume that the color
$\lambda\in \Lambda\setminus \Lambda_0$ appears in $E(G_1)$ as a
color of regular edges only. Let $G_2$ be a pointed $\Lambda^\p$-colored
graph with a distinguished edge $e$ that is neither a loop nor a
bridge, together with a set of zero edges $\H_2\subseteq E(G_2)\setminus
\{e\}$, which form a $\Lambda_0$-colored subgraph. To simplify our
arguments, we will assume that no edge of $G_2$ has color $\lambda$, $e$ is colored with $\nu$ and
that no other edges in $E(G_2)$ are colored with $\nu$.

As in \cite{DGH1} and \cite{DHCl}
we define the {\em $\lambda$-colored tensor product
$G_1\otimes_{\lambda} G_2$} as the graph obtained as follows. We associate a
distinct copy $G_2^f$ of $G_2$ to each edge $f$ of color $\lambda$ in $G_1$ by
identifying the edge $f$ with the copy $e_f$ of $e$ in $G_2^f$, and then
removing the identified edges $e_f$ and $f$. In particular, if $f$ is a
loop, then we will identify the endpoints of $e_f$ in $G_2^f$ (and
remove $e_f$). The resulting graph will contain the edges of
$\H_1$ and several copies of the edges of $\H_2$. We define the set
$\H$ of zero edges of $G_1\otimes_{\lambda} G_2$ as the set of all edges
belonging to $\H_1$ or any copy of $\H_2$.

Let us fix a contracting set $\C$ of $G_1\otimes_{\lambda} G_2$ with
respect to $\H$ and let $\D$ be the corresponding deleting set.
Let $f\in E(G_1)$ be of color $\lambda$ and let $G_2^f$ be
the copy of $G_2$ associated to $f$ with $e_f$ being the corresponding
distinguished edge of $G_2^f$. First we would like to make the following
fundamental observation on the intersection of $\C$ and $\D$ with
$E(G_2^f)$.
\begin{proposition}
\label{prop:local}
Let $\C_f=\C\cap E(G_2^f)$, $\D_f=\D\cap E(G_2^f)$ and
$\H_f=\H\cap E(G_2^f)$. Then
$\C_f$ is a contracting set of $G_2^f$ with respect
to $\H_f\cup \{e_f\}$ and $\D_f$ is the corresponding deleting
set.
\end{proposition}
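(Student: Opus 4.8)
The plan is to verify the two defining conditions of Definition~\ref{TD1} locally inside $G_2^f$: that $\C_f$ contains no cycle, and that $\D_f=E(G_2^f)\setminus(\C_f\cup\H_f\cup\{e_f\})$ contains no cocycle of $G_2^f$. The first condition is essentially free: any cycle contained in $\C_f\subseteq\C$ would be a cycle contained in the contracting set $\C$ of $G_1\otimes_\lambda G_2$, contradicting that $\C$ is a contracting set. (Here I use that the copies $G_2^f$ for distinct $f$ share at most two vertices and no edges with each other and with $G_1\setminus\H_1$, so a cycle living inside $\C_f$ really is a cycle of the big graph.) So the whole content is the cocycle condition.

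For the cocycle condition I would argue by contradiction: suppose $D_0\subseteq\D_f$ is a cocycle of $G_2^f$, i.e. a minimal edge set whose removal disconnects $G_2^f$ into two parts, say with vertex classes $A$ and $B$, where the two endpoints $u,v$ of $e_f$ lie on the same side — without loss of generality $u,v\in A$ — because otherwise $e_f$ would join $A$ and $B$ and $D_0\cup\{e_f\}$ would be the relevant cut, but $e_f\notin\D_f$; I need $D_0$ itself to be a cocycle of $G_2^f$ viewed with $e_f$ present, which forces $u,v$ on the same side (if $D_0$ is a cocycle when $e_f$ is deleted but $u,v$ are separated, then $D_0\cup\{e_f\}$ is the cocycle with $e_f$ present and $D_0$ alone is not minimal in the deletion — this is exactly the subtlety the statement is built to handle). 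The key step is then to show $D_0$ is also a cocycle of the whole tensor product $G_1\otimes_\lambda G_2$, which would contradict $\D$ containing no cocycle. To see this, note that in $G_1\otimes_\lambda G_2$ the copy $G_2^f$ is attached to the rest of the graph only at the two vertices $u$ and $v$; since $u$ and $v$ both lie in $A$, removing $D_0$ from $G_1\otimes_\lambda G_2$ disconnects the $B$-vertices of $G_2^f$ from everything else, so $D_0$ contains a cocycle of the big graph, and by minimality it is one.

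The one place requiring care — and the main obstacle — is pinning down the "same side" claim and making the attachment argument airtight when $f$ is a loop of $G_1$ (so that $u=v$ after the identification) or when $u$ or $v$ happen to coincide with cut vertices of the ambient graph. In the loop case $e_f$ has been contracted to a single vertex $w$, and a cocycle $D_0$ of $G_2^f$ with $e_f$ contracted is the same thing as a cocycle of the original $G_2$ separating the two endpoints of $e$ — wait, rather: with $e_f$ a loop, "$D_0$ a cocycle of $G_2^f$" just means a cocycle of the graph obtained by identifying $u$ and $v$, and such a $D_0$ automatically has $w$ on one side, so $w$ (the unique attachment point) lies in one class and the argument goes through verbatim with a single attachment vertex. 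I would handle the generic (non-loop) case in detail and remark that the loop case is the same with $u=v$. Everything else — the bijection between cocycles of $G_2^f$ relative to $\{e_f\}$ and the relevant cuts of the ambient graph — is a routine consequence of the fact that $V(G_2^f)\cap V(G_1\otimes_\lambda G_2\setminus E(G_2^f))\subseteq\{u,v\}$, which I would state once and reuse.

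Finally, having both conditions, $\C_f$ is a contracting set of $G_2^f$ with respect to $\H_f\cup\{e_f\}$ by definition, and $\D_f$ is exactly $E(G_2^f)\setminus(\C_f\cup(\H_f\cup\{e_f\}))$, so it is the corresponding deleting set, completing the proof.
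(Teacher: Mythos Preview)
Your proposal is correct and rests on the same key observation as the paper: the copy $G_2^f$ meets the rest of $G_1\otimes_\lambda G_2$ only at the two endpoints $u,v$ of $e_f$, and the edge $e_f$ (which lies in $\H_f\cup\{e_f\}$, not in $\D_f$) can stand in for any connection through the outside. The paper phrases the cocycle step in the dual language of paths: pick $v_1,v_2$ separated in $G_2^f-\D_f$, find a path between them in the big graph (using that $\D$ has no cocycle), and replace its outside segment by $e_f$ to get a path back inside $G_2^f-\D_f$. Your version argues on the level of cuts---$u,v$ must fall on the same side of $D_0$ since $e_f\notin D_0$, hence $D_0$ still separates the $B$-side from everything in the big graph---which is exactly the contrapositive of the paper's path replacement. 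Both are fine; the paper's is a bit more direct and avoids your detour through the ``same side'' claim, while yours makes the role of $e_f$ as a zero edge slightly more explicit.
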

\begin{proof}
Since $\C_f$ is a subset of $\C$, it clearly does not contain any
cycle. If $\D_f$ contains a cocycle, then after deleting the edges of
$\D_f$, there exist two vertices $v_1$ and $v_2$ that are not connected by a
path in $G_2^f-\D_f$. Since
$\D_f\subset\D$ and $\D$ does not contain any cocycle
in $G_1\otimes_\lambda G_2$, there must be a path $\gamma$ from $v_1$ to $v_2$ in
$G_1\otimes_\lambda G_2$. Since the only vertices where a path of $G_1\otimes_\lambda G_2$ can leave or
enter $G_2^f$ are the endpoints of $e_f$, the part of $\gamma$ that lies outside $G_2^f$
must form a path $\gamma'$ connecting the endpoints of $e_f$. Replacing
$\gamma'$ with $e_f$ results in a path in $G_2^f-\D_f$ connecting
$v_1$ and $v_2$, a contradiction to our assumption that $v_1$ and $v_2$
that are not connected by a path in $G_2^f-\D_f$.
\end{proof}

As a consequence of Proposition~\ref{prop:local} we may use
Definition~\ref{def:types} to classify the
pairs $(\C_f,\D_f)$ into type $\mathscr{C}$, type $\mathscr{D}$ and type zero. Using this
classification we define an {\em induced partition}
$(\C_1,\D_1,\widehat{\H_1})$ of $E(G_1)$ as follows:
\begin{itemize}
\item[(i)] $f\in \C_1$ if the color of $f$ is not $\lambda$ and
$f\in \C$, or the color of $f$ is $\lambda$ and $(\C_f,\D_f)$ has type $\mathscr{C}$;
\item[(ii)] $f\in \D_1$ if the color of $f$ is not $\lambda$ and
$f\in \D$, or the color of $f$ is $\lambda$ and $(\C_f,\D_f)$ has type $\mathscr{D}$;
\item[(iii)] $f\in \widehat{\H_1}$ if the color of $f$ is not
  $\lambda$ and  $f\in \H_1$, or the color of $f$ is $\lambda$ and
  $(\C_f,\D_f)$ has type zero.
\end{itemize}

\begin{proposition}
\label{prop:c1d1}
Let $\C$ be a contracting set of $G_1\otimes_{\lambda} G_2$ with
respect to $\H$ and let $\D$ be the corresponding deleting set. Let
$(\C_1,\D_1,\widehat{\H_1})$ be the induced partition of
$E(G_1)$. Then $\C_1$
is a contracting set of $G_1$ with respect to $\widehat{\H_1}$, and
$\D_1$ is the corresponding deleting set.
\end{proposition}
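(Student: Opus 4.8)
The plan is to verify the two defining conditions of a contracting set (Definition~\ref{TD1}) for $\C_1$ with respect to $\widehat{\H_1}$, namely that $\C_1$ contains no cycle and that $\D_1 = E(G_1)\setminus(\C_1\cup\widehat{\H_1})$ contains no cocycle. The key technical tool will be Proposition~\ref{prop:tequiv}: after contracting $\C_f$ and deleting $\D_f$ inside each copy $G_2^f$ with $f$ of color $\lambda$, the distinguished edge $e_f$ becomes a loop, a bridge, or neither, according to whether $(\C_f,\D_f)$ has type $\mathscr{C}$, $\mathscr{D}$, or zero. The strategy is therefore to perform \emph{all} the local contractions and deletions inside the copies $G_2^f$ first; after this partial reduction of $G_1\otimes_\lambda G_2$, each color-$\lambda$ edge $f$ of $G_1$ is ``represented'' in the reduced graph by a single edge (a loop if type $\mathscr{C}$, a bridge-like edge if type $\mathscr{D}$, or a non-trivial edge if type zero) joining the images of the two endpoints of the original edge $f$. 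Calling this reduced graph $\widetilde{G}$, I would argue that $\widetilde{G}$ is, up to the trivial identifications forced by contractions, essentially $G_1$ with the color-$\lambda$ edges relabelled, and that under this correspondence $\C_1$ maps to the contracting edges and $\D_1$ to the deleting edges and $\widehat{\H_1}$ to the zero edges.

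First I would establish the cycle-free condition. Suppose $\C_1$ contained a cycle $C_0$ in $G_1$. Edges of $C_0$ that are not of color $\lambda$ lie in $\C$ directly; each edge $f\in C_0$ of color $\lambda$ has $(\C_f,\D_f)$ of type $\mathscr{C}$, so $\C_f\cup\{e_f\}$ contains a cycle in $G_2^f$, equivalently $\C_f$ contains a path joining the two endpoints of $e_f$ inside $G_2^f$. Splicing these paths together along $C_0$ (replacing each color-$\lambda$ edge $f$ by the corresponding path in $\C_f$) produces a closed walk in $G_1\otimes_\lambda G_2$ using only edges of $\C$; since a closed walk of distinct edges contains a cycle and all the pieces are edge-disjoint (they live in distinct copies $G_2^f$ and in $G_1$), $\C$ would contain a cycle, contradicting that $\C$ is a contracting set. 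Hence $\C_1$ has no cycle.

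Next, the cocycle-free condition for $\D_1$. Suppose $\D_1$ contained a cocycle $D_0$ of $G_1$, i.e.\ deleting $D_0$ disconnects $G_1$ into parts $V'$ and $V''$. I would show that the corresponding edge set in $G_1\otimes_\lambda G_2$ — namely, for each non-$\lambda$ edge of $D_0$ the edge itself, and for each color-$\lambda$ edge $f\in D_0$ a cocycle of $G_2^f$ separating the two endpoints of $e_f$, which exists because $(\C_f,\D_f)$ is type $\mathscr{D}$ and hence $\D_f\cup\{e_f\}$ contains a cocycle of $G_2^f$ — is a set contained in $\D$ whose removal disconnects $G_1\otimes_\lambda G_2$. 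Concretely: deleting this set severs every $G_1$-level connection between the $V'$-side and the $V''$-side, and since the only way a path of $G_1\otimes_\lambda G_2$ can cross between copies is through the endpoints of the various $e_f$, there is no path left; but $\D$ contains this set and yet contains no cocycle, so in fact deleting $\D$ already disconnects these two vertex classes, contradicting that $\D$ is a deleting set. Therefore $\D_1$ contains no cocycle, and together with the previous paragraph this shows $\C_1$ is a contracting set of $G_1$ with respect to $\widehat{\H_1}$, with $\D_1$ the corresponding deleting set.

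The main obstacle I anticipate is the bookkeeping around \emph{loops} of $G_1$ of color $\lambda$ (where the two endpoints of $e_f$ get identified in the construction) and, more generally, making the ``splice the local paths/cocycles together'' arguments rigorous without appealing to matroid duality — which the authors have deliberately avoided because of the zero edges. The cleanest way around this is probably to argue entirely in terms of the reduced graph $\widetilde{G}$ and the equivalent characterizations in Proposition~\ref{prop:tequiv} and Lemma~\ref{l:descend}: once one knows that contracting $\C$ and deleting $\D$ in $G_1\otimes_\lambda G_2$ can be carried out by first doing all the \emph{local} operations in the copies $G_2^f$ (which is legitimate since the multiset of blocks, and indeed the graph up to vertex pivots, is order-independent), the remaining operations on $\widetilde{G}$ are exactly the operations defining $\C_1$ and $\D_1$ on $G_1$, and the non-loop/non-bridge constraints transfer verbatim via Proposition~\ref{prop:tequiv}. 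I would lean on that reduction to keep the proof short rather than manipulating cycles and cocycles by hand.
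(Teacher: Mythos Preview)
Your proposal is correct and takes essentially the same approach as the paper: replace each $\lambda$-colored edge of a hypothetical cycle in $\C_1$ (resp.\ cocycle in $\D_1$) by a path in $\C_f$ connecting the endpoints of $e_f$ (resp.\ a minimal disconnecting edge set in $\D_f$) to manufacture a cycle in $\C$ (resp.\ a cocycle in $\D$), contradicting the contracting-set hypothesis. The paper's proof is a terse two-sentence version of exactly your paragraphs two and three; the reduced-graph $\widetilde{G}$ machinery you sketch as a fallback is not needed.
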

\begin{proof}
Assume, by way of contradiction, that $\C_1$ contains a cycle
$C=\{f_1,\ldots,f_k\}$. After replacing each $f_i$ of color $\lambda$
with a path in $\C$ connecting the endpoints of the distinguished edge
$e$ in the associated type $\mathscr{C}$ copy of $G_2$ we obtain a cycle in
$\C$, a contradiction. Therefore $\C_1$ cannot contain any cycle.
We obtain a similar contradiction if we assume that $\D_1$ contains a
cocycle $\{f_1,\ldots,f_k\}$, after replacing each $f_i$ of color
$\lambda$ with a minimal set of edges belonging to $\D$ in the
associated type $\mathscr{D}$ copy of $G_2$.
\end{proof}

We conclude this section with the converse of
Proposition~\ref{prop:c1d1}. Consider a colored graph $G_1$
and a pointed colored graph $G_2$ subject to the assumptions made at the
beginning of this section.
Let $\H_\lambda$ be a subset of the $\lambda$ colored edges of $G_1$ and let $\widehat{\H_1}=\H_1\cup\H_\lambda$. Let $\C_1$ be a contracting set of $G_1$
with respect to $\widehat{\H_1}$ and let $\D_1$ be the corresponding
deleting set.  For each
edge $f\in E(G_1)$ of color $\lambda$, let $G_2^f$ be the copy of $G_2$
associated to $f$ in $G_1\otimes_\lambda G_2$ with $e_f$ being the
distinguished edge. Let us select a contracting set $\C_f$ (and the
corresponding deleting set $\D_f$) of $G_2^f$
respect to $\H_f\cup\{e_f\}$ in the following way: if $f\in \C_1$,
we select a pair $(\C_f,\D_f)$ of type $\mathscr{C}$, if $f\in \D_1$, we select a
pair $(\C_f,\D_f)$ of type $\mathscr{D}$ and if $f\in \H_\lambda$, we select a
pair $(\C_f,\D_f)$ of type zero. Let $\C$ be the
union of all edges in $\C_1$ whose color is not $\lambda$ and of all
the sets $\C_f$ and let $\D$ be the union of all edges in $\D_1$ not
whose color is not $\lambda$ and of all the sets $\D_f$.

\begin{theorem}
\label{thm:bigCD}
The edge set $\C$ defined above is a
contracting set with respect to $\H$ and $\D$ is the corresponding
deleting set.
\end{theorem}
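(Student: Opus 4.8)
The plan is to verify the two defining conditions of a contracting set: that $\C$ contains no cycle, and that $\D=E(G_1\otimes_\lambda G_2)\setminus(\C\cup\H)$ contains no cocycle. The structure of the argument mirrors the proof of Proposition~\ref{prop:c1d1}, but run in reverse, combined with the local information carried by each $\C_f$ (which is a contracting set of $G_2^f$ with respect to $\H_f\cup\{e_f\}$ by construction, using Definition~\ref{def:types}). First I would set up the basic dictionary: a path in $G_1\otimes_\lambda G_2$ that uses an edge in the interior of some copy $G_2^f$ can only enter and leave $G_2^f$ through the two endpoints of $e_f$, so its trace inside $G_2^f$ is a union of paths joining those two endpoints; dually, a cocycle of $G_1\otimes_\lambda G_2$ restricted to $G_2^f$ either separates the two endpoints of $e_f$ within $G_2^f$ or does not. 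This is exactly the interface that makes ``type $\mathscr{C}$'' and ``type $\mathscr{D}$'' the right local conditions, via Proposition~\ref{prop:tequiv}.

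The acyclicity of $\C$: suppose $C\subseteq\C$ is a cycle. If $C$ lies entirely inside one copy $G_2^f$, that contradicts $\C_f$ being a contracting set. Otherwise $C$ meets $E(G_1)$-type structure nontrivially; I would contract, for each $f$ of color $\lambda$ with $f\in\C_1$, the type $\mathscr{C}$ copy $\C_f$ down — i.e., pass to $G_1$ by collapsing each $G_2^f$ — and argue that $C$ projects to a cycle of $\C_1$ in $G_1$ (the portion of $C$ inside each $G_2^f$ with $f\in\C_1$ connects the two endpoints of $e_f$, because for $f\in\D_1$ or $f\in\H_\lambda$ the edges of $\C_f$ cannot join those endpoints — that is precisely what type $\mathscr{D}$ and type zero forbid, again by Proposition~\ref{prop:tequiv}). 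This contradicts $\C_1$ being a contracting set of $G_1$ with respect to $\widehat{\H_1}$. Dually, for the no-cocycle condition on $\D$: a minimal edge cut of $G_1\otimes_\lambda G_2$ contained in $\D$ restricts inside each $G_2^f$ to a cut; for $f$ with $f\in\C_1$ or $f\in\H_\lambda$ it cannot separate the endpoints of $e_f$ (types $\mathscr{C}$ and zero forbid this), so contracting those copies and noting that the copies with $f\in\D_1$ contribute a separating cut carrying $e_f$, the cut projects to a cocycle of $\D_1$ in $G_1$ — contradicting that $\D_1$ is the deleting set corresponding to $\C_1$. Finally I would record that $\D$ as defined really is the complement $E(G_1\otimes_\lambda G_2)\setminus(\C\cup\H)$: inside each $G_2^f$ this follows from $\D_f$ being the deleting set complementary to $\C_f$ with respect to $\H_f\cup\{e_f\}$, and for the non-$\lambda$ edges of $G_1$ it follows from $\D_1$ being complementary to $\C_1$ with respect to $\widehat{\H_1}$ together with the observation that the $\lambda$-edges of $\widehat{\H_1}$, which are the $f\in\H_\lambda$ (type zero), contribute no regular edges directly — only the zero edges of their copies, which land in $\H$.

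The main obstacle I expect is the bookkeeping in the projection-to-$G_1$ argument, specifically making precise that a cycle (resp.\ cocycle) of $G_1\otimes_\lambda G_2$ inside $\C$ (resp.\ $\D$) really does descend to a cycle (resp.\ cocycle) of $G_1$ after collapsing each copy $G_2^f$ with $f\notin\D_1$ (resp.\ $f\notin\C_1$): one has to handle the case where the cycle enters and leaves a single copy $G_2^f$ several times, and the case of loops ($f$ a loop in $G_1$, so the two endpoints of $e_f$ are identified), where ``type $\mathscr{C}$'' means $\C_f\cup\{e_f\}$ contains a cycle after that identification. I would manage this by working in the quotient graph $G_1\otimes_\lambda G_2 / \bigl(\bigcup_{f\in\C_1,\,c(f)=\lambda}\C_f\bigr)$, in which each such $G_2^f$ becomes the single edge $e_f=f$ (up to deleting the remaining non-$\C_f$ edges of that copy, which are in $\D\cup\H$), so that the quotient contains $G_1$ as the relevant subgraph and the cycle/cocycle statements transfer directly; minimality of the cocycle is then inherited because removing any edge of the projected cut fails to separate already in the quotient. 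With that quotient set up, the acyclicity and no-cocycle claims reduce verbatim to the corresponding properties of $\C_1$ and $\D_1$ in $G_1$, and the proof closes.
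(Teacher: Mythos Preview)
Your proposal is essentially correct and, for the acyclicity of $\C$, coincides with the paper's argument: a cycle in $\C$ cannot live inside a single $G_2^f$, and the portion inside any visited copy must join the endpoints of $e_f$ through $\C_f$, forcing $f\in\C_1$; replacing each such path by $f$ yields a cycle in $\C_1$.

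For the no-cocycle claim you and the paper diverge. You project a hypothetical cocycle of $\D$ down to a cocycle of $\D_1$ in $G_1$. The paper instead uses the equivalent characterization ``$\D$ contains no cocycle $\Leftrightarrow$ every $g\in\D$ has its endpoints joined by a walk in $\C\cup\H$'' and verifies this edge by edge: for $g\in\D_1$ (non-$\lambda$) it lifts a path in $\C_1\cup\widehat{\H_1}$ to $\C\cup\H$ by replacing each $\lambda$-edge with a path in the corresponding $\C_h\cup\H_h$; for $g\in\D_f$ it takes a path in $\C_f\cup\H_f\cup\{e_f\}$ and, if $e_f$ occurs, replaces it either by a path in $\C_f\cup\H_f$ (when $f\in\C_1\cup\widehat{\H_1}$) or by the walk already built for $f$ (when $f\in\D_1$). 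Your projection approach works too, but it requires one step you do not make explicit: showing that the bipartition $(A,B)$ induced by the cocycle restricts to a \emph{nontrivial} bipartition of $V(G_1)$. This uses that each $\D_f$ contains no cocycle of $G_2^f$, so every internal vertex of any $G_2^f$ stays connected to an endpoint of $e_f$ after removing $D_0$; hence neither side of the cut can consist solely of internal vertices. Once that is in place, the crossing edges form a cut of $G_1$ contained in $\D_1$, and you are done. The paper's route avoids this bookkeeping entirely, at the cost of a slightly longer case analysis; your route is more symmetric with the acyclicity argument but needs the extra nontriviality check.
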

\begin{proof}
Clearly, $\D=E(G_1\otimes_{\lambda} G_2)\setminus
(\C\cup \H)$, so we only need to prove that $\C$ contains no cycle and
$\D$ contains no cocycle.

Assume, by way of contradiction, that $\C$ contains a cycle $C$. This cycle
cannot be contained entirely in a copy $G_2^f$ of $G_2$ since no set
$\C_f$ contains a cycle. Thus $\C_f$ must
be either the empty set or a path $\gamma_f$ connecting the endpoints of
$e_f$. In the latter case
$f\in E(G_1)$ must belong to $\C_1$ since $\gamma_f\cup\{e_f\}$ forms a cycle hence  is of type $\mathscr{C}$. After replacing each such
path $\gamma_f$ with the edge $f\in \C_1$ we obtain a cycle contained in
$\C_1$, in contradiction with $\C_1$ being a contracting set.

To show that $\D$ contains no cocycle it suffices to show that for every
edge $g\in \D$ there is a walk contained in $\C\cup\H$ connecting the
endpoints of $g$. We will have two cases, depending on whether $g$
belongs to $E(G_1)$ or it belongs to a copy $G_2^f$ of $G_2$.
Consider first the case $g\in E(G_1)$. Then $g\in\D_1$, and there is a
path $\gamma_1(g)$ contained in $\C_1\cup\widehat{\H_1}$ connecting the endpoints of
$g$ in $G_1$. If the color of an edge $h$ in $\gamma_1(g)$ is not $\lambda$
then this edge also belongs to $E(G_1\otimes_{\lambda}G_2)$.
If the color of $h\in \gamma_1(g)$ is $\lambda$ then $(\C_h,\D_h)$ has type $\mathscr{C}$ or zero and we may replace $h$ with a path $\gamma(h)$ connecting
the endpoints of $h$ in $\C\cup \H$. Thus we obtain a walk in $\C\cup\H$
that connects the endpoints of $g$.
Consider finally the case when $g$ belongs to a copy $G_2^f$ of
$G_2$. Then $g\in \D_f$, and there is a path $\gamma_f(g)$ in
$\C_f\cup\H_f\cup\{e_f\}$ connecting the endpoints of $g$. If
$\gamma_f(g)$ does not contain $e_f$ then all of its edges belong to
$\C\cup\H$ and we are done. Thus we may assume $\gamma_f(g)$ contains
$e_f$. If $f$ belongs to $\C_1\cup\widehat{\H_1}$ then, in analogy to the
previous case, we may replace $e_f$ with a path
$\gamma_f(e_f)$ connecting its endpoints in $\C_f\cup\H_f$ and obtain a
walk contained in $\C\cup \H$ connecting the endpoints of $g$. We
are left with the case when $f$ belongs to $\D_1$.
Repeating the argument of the case $g\in E(G_1)$ for $f$, there
is a walk connecting the endpoints of $f$ in $\C_1\cup
\D_1$ which may be transformed into a path $\gamma(f)$ connecting the
endpoints of $f$ in $\C\cup \H$. We may replace $e_f$ in $\gamma_f(g)$
with $\gamma(f)$  and obtain a walk connecting the endpoints of $g$ in
$\C\cup\H$.
\end{proof}

\section{The tensor product formula}
\label{sec:tp}

This section contains the main result of our paper. The issue here is to
find a way to compute the relative Tutte polynomial of
$G_1\otimes_\lambda G_2$ in terms of the relative Tutte polynomial of
$G_1$ and the pointed relative Tutte polynomials of $G_2$ via some
suitable variable substitutions. In the case that there are no zero
edges involved, this is done by keeping
all variables of color $\mu\neq \lambda$ in $T(G_1)$ unchanged,
and using the substitutions
$
X_{\lambda}\mapsto T(G_1- e)$, $x_{\lambda}\mapsto T_L(G_1,e)$,
$Y_{\lambda}\mapsto T(G_1/e)$ and $y_{\lambda}\mapsto T_C(G_1,e)
$, see \cite[Theorem 5.1]{DGH1} and \cite[Theorem 3]{DHCl}. The new
obstacle we face here (when there
are zero edges present) is
that a choice of contracting set in $G_1\otimes_\lambda G_2$ may turn
some $G_2$ copies into the zero types by the results we have established
in the previous sections. That is, a $\lambda$ colored edge in $G_1$ may
not always be treated as a regular edge (which is either in a
contracting set or a deleting set). To simplify our arguments,
in this section we also assume that $G_1$ is a connected graph and that
$G_2$ is a connected graph in which the pointed edge $e$ is neither a
loop nor a coloop. To distinguish an edge of color $\lambda$, treated
as zero edge, from the regular edge of the same color, we will change
its color to a new color $\lambda_0\not\in \Lambda$, as defined  more
formally in the following definition.

\begin{definition}
Let $G$ be any $\Lambda$-colored graph together with a set of zero edges
$\H$, let $\lambda\in \Lambda$ be a color used to color regular edges
only and let $S$ be any subset of the set $E_{\lambda}\subseteq
E(G)\setminus \H$ of $\lambda$-colored edges. Let $\lambda_0\not\in
\Lambda$ be a new color. We define the graph $G_S$ as the
$\Lambda\cup\{\lambda_0\}$-colored graph obtained from $G$ by changing
the color of each edge belonging to $S$ to $\lambda_0$.
\end{definition}

In analogy to \cite[Theorem 5.1]{DGH1} and
\cite[Theorem 3]{DHCl}, we will express
$T_\H^{\Lambda_0}(G_1\otimes_\lambda G_2)$ as a
function of graph polynomials associated to $G_2$ and all the graphs of the form
$G_{1S}$, respectively. Since we are dealing with more than just the graphs $G_1$ and $G_2$ (as is the case when there are no zero edges involved), the  procedure is much more complex, we will break down our process into a sequence of homomorphisms and
$\T(\R,\Lambda\setminus \Lambda_0)$-linear maps. The first homomorphism
applied is a direct generalization of the substitutions used in
\cite[Theorem 5.1]{DGH1} and \cite[Theorem 3]{DHCl}, which generalize a
transformation introduced by Brylawski~\cite{B,BO}.

\begin{definition}
Let $G$ be a pointed $\Lambda^\p$-colored graph,
together with a $\Lambda_0$-colored subgraph $\H$ of zero edges (recall that $\Lambda^\p=\Lambda\cup\{\nu\}$ and $\nu$ is the unique color for the pointed edge $e$). We define the {\em regular Brylawski map $\beta_{\lambda,G}\in
  \operatorname{End}(\T(\R,\Lambda,\Lambda_0))$}, as the endomorphism
sending each variable $x_\mu$, $X_\mu$, $y_\mu$,
$Y_\mu$ such that
$\mu\neq \lambda$ into itself and sending
$X_{\lambda}$ into $T_-(G,e)$, $x_{\lambda}$ into
$T_{L}(G,e)$,
$Y_{\lambda}$ into $T_{/}(G,e)$ and $y_{\lambda}$ into
$T_{C}(G,e)$. We will use $\beta_\lambda$ for $\beta_{\lambda,G}$ when the graph $G$ is clear in the context of the problem.
\end{definition}

In analogy to \cite[Lemma 4]{DHCl}, the fact that the regular Brylawski
map is well-defined is a direct consequence of equations
\eqref{tctnmat1} and \eqref{tctnmat2}. Furthermore, for each variable $z_{[\Gamma]}$
that appears in the polynomials $T_{-}(G,e)$,
$T_{L}(G,e)$, $T_{/}(G,e)$ and
$T_{C}(G,e)$, the corresponding graph $\Gamma$ has
no edge of color $\nu$, thus $\beta_\lambda$ indeed takes
the ring $\T(\R,\Lambda,\Lambda_0)$ into itself. However, when applying $\beta_\lambda$ to a relative Tutte polynomial
of a graph with $k\ge 2$ $\lambda$ colored edges, the result is a polynomial containing terms of the form
$z_{[\Gamma_1]}\cdot z_{[\Gamma_2]}\cdots z_{[\Gamma_k]}$, which is not a universal relative Tutte polynomial (recall that a universal relative Tutte polynomial is a linear combination of the terms of the form $z_{[\Gamma]}$). The need to change this into a universal relative Tutte polynomial leads to the next definition.

\begin{definition}
The {\em regular splicing map} $\sigma:
  \T(\R,\Lambda,\Lambda_0)\rightarrow \T(\R,\Lambda,\Lambda_0)$ is the
$\T(\R,\Lambda\setminus\Lambda_0)$-linear map
induced by $\sigma(z_{[\Gamma_1]}\cdots
z_{[\Gamma_k]})
=z_{[\Sigma([\Gamma_1],\ldots, [\Gamma_k])]}$. Here
  $\Sigma([\Gamma_1],\ldots, [\Gamma_k])$ is the connected graph obtained
  by the repeated splicing of all connected components of the graph
  $\Gamma_1\uplus\cdots \uplus \Gamma_k$ (here $\uplus$ stands for disjoint union).
\end{definition}

When we perform the vertex splicing operation on a pair of connected
graphs, the resulting graph is connected and unique up to vertex pivot
equivalence. It follows by induction on the number of connected
components $m$ of $\Gamma_1\uplus\cdots \uplus \Gamma_k$ that, repeating
the vertex splicing operation $m-1$ times in such a way that each
splicing operation merges vertices from different connected components,
results in a connected graph which is unique up to vertex pivot
equivalence.  Thus  $\sigma$
is well defined.

For each $S\subseteq E_{\lambda}(G_1)$, the polynomial
$\sigma\beta_\lambda
(T^{\Lambda_0\cup\{\lambda_0\}}_{\H_1\cup
  S}(G_{1S}))$ (where $\beta_\lambda=\beta_{\lambda,G_2}$) is a $\T(\R,\Lambda\setminus\Lambda_0)$-linear
combination of the variables $\{z_{[\Gamma]}\::\: [\Gamma]\in
\VPE(\Lambda_0\cup{\lambda_0})\}$. In other words,
$$
\sigma\beta_{\lambda}
(T^{\Lambda_0\cup\{\lambda_0\}}_{\H_1\cup
  S}(G_{1S}))\in \bigoplus_{[\Gamma]\in
\VPE(\Lambda_0\cup{\lambda_0})} \T(\R,\Lambda\setminus\Lambda_{0})
z_{[\Gamma]}.$$
We will use this module as the domain of the {\em zero Brylawski map},
to be defined below. This map is analogous to the maps introduced  by
Brylawski~\cite{B,BO} (and generalized in \cite[Theorem 5.1]{DGH1} and
\cite[Theorem 3]{DHCl}) only in the sense that they are associated to
replacing edges in (a recolored variant of) $G_1$ with copies of $G_2$. Let $G_2$ be a graph with a distinguished (pointed) edge colored with the unique color $\nu$. Recall that $T_{0}(G_2,e)$ is a $\T(\R,\Lambda\setminus\Lambda_0)$-linear
combination of terms of the form $z_{[\Gamma]}$ where each $\Gamma$ contains exactly one edge of color $\nu$. Thus we have
$
T_0(G_2,e)=\sum_{1\le j\le n}p_j\cdot z_{[\Gamma_j]}
$
where $p_j\in  \T(\R,\Lambda\setminus\Lambda_{0})$ and $\Gamma_j$
contains exactly one $\nu$-colored edge for each $j$. Let
$\textbf{P}=\{p_1,p_2,\ldots ,p_n\}$. For any graph $\Gamma$ consisting
of only zero edges and $k$ $\lambda_0$-colored edges, let us number its
$\lambda_0$-colored edges by $1$, $2$, \ldots , $k$ in an arbitrary
way. For any choice of $q_1=p_{j_1}\in  \textbf{P}$, $q_2=p_{j_2}\in
\textbf{P}$, \ldots , $q_k=p_{j_k}\in \textbf{P}$, let
$\Gamma_{q_1,\ldots ,q_k}$ be the graph obtained by identifying the
$i$-th $\lambda_0$-colored edge in $\Gamma$ with the $\nu$-colored edge
in $\Gamma_{j_i}$ first, then removing the identified edge, for each
$1\le i\le k$ (so $\Gamma_{q_1,\ldots,q_k}$ is obtained through a total of $k$ 2-sum operations on $\Gamma$).

\begin{definition}
We define the {\em zero Brylawski map}
$\beta_{0,G_2}$ as the
$\T(\R,\Lambda\setminus\Lambda_{0})$-linear map from
$\bigoplus_{[\Gamma]\in
\VPE(\Lambda_0\cup{\lambda_0})} \T(\R,\Lambda\setminus\Lambda_{0})
z_{[\Gamma]}$
to
$ \T(\R,\Lambda,\Lambda_{0})$
induced by the mapping that sends each
$z_{[\Gamma]}$ to the symmetric sum
$$
\sum_{q_1\in \textbf{P},\ldots ,q_k\in \textbf{P}}q_1q_2\cdots q_k\cdot
z_{[\Gamma_{q_1,\ldots ,q_k}]}.
$$
\end{definition}

Notice that although this definition presupposes labeling the $\lambda_0$-colored edges of
$\Gamma$ in some order, the end result is independent of the choice
of this labeling, since the summation is symmetric hence is invariant under the permutations of the factors. Again we will use the short hand notation $\beta_0$ for $\beta_{0,G_2}$ when $G_2$ is clear from the context of the problem.

\begin{theorem}\label{TensorFormula}
The universal $\Lambda_0$-colored relative Tutte polynomial
$T_\H^{\lambda_0}(G_1\otimes_\lambda G_2)$ is given by
$$
T_\H^{\lambda_0}(G_1\otimes_\lambda G_2)=
\sum_{S\subseteq E_{\lambda}(G_1)}
\Phi(T^{\Lambda_0\cup\{\lambda_0\}}_{\H_1\cup
  S}(G_{1S})),
$$
where $\Phi=
\beta_{0}\circ
\sigma\circ\beta_{\lambda}$.
\end{theorem}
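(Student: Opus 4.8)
The plan is to expand both sides as sums over contracting sets and match them, using the structural results of Section~\ref{sec:cs}. Since $T_\H^{\Lambda_0}(G_1\otimes_\lambda G_2)$ and all the pointed relative Tutte polynomials of $G_2$ live in a Tutte ring and are labeling independent, I am free to compute with any proper labeling of $G_1\otimes_\lambda G_2$. I would fix proper labelings $\phi_1$ of $G_1$ with respect to $\H_1$ and $\phi_2$ of $G_2$ with respect to $\H_2\cup\{e\}$, and assemble a proper labeling $\phi$ of $G_1\otimes_\lambda G_2$ that is \emph{compatible with the tensor decomposition}: the regular edges of each copy $G_2^f$ get a block of consecutive labels ordered among themselves by $\phi_2$, the blocks of distinct copies are ordered by $\phi_1(f)$, and the non-$\lambda$ edges of $G_1$ are interleaved so as to respect $\phi_1$. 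For a contracting set $\C$ of $G_1\otimes_\lambda G_2$ with respect to $\H$, write $S:=\{f\in E_\lambda(G_1):(\C_f,\D_f)\text{ has type zero}\}$ (this makes sense by Proposition~\ref{prop:local}). The first technical ingredient I need is a \emph{locality lemma} for the compatible labeling, proved via Definition~\ref{acti_alt_def}: the activity of a non-$\lambda$ edge $g$ of $G_1$ inside $G_1\otimes_\lambda G_2$ coincides with its activity inside $G_{1S}$, because for the purpose of which cocycles and cycles $g$ lies on a type $\mathscr{C}$, $\mathscr{D}$ or zero copy $G_2^f$ behaves just like a contracted, deleted or zero edge $f$ (Proposition~\ref{prop:tequiv}); and the activity of a regular edge of $G_2^f$ inside $G_1\otimes_\lambda G_2$ coincides with its activity inside $G_2^f$ with $e_f$ regarded as a zero edge, \emph{except} that the external environment of $G_2^f$ connects the endpoints of $e_f$ precisely when $f$ is not ``bridge-like'' at the moment its block is reached --- an effect that is invisible when $\H_1=\H_2=\emptyset$ but must be tracked here.

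Granting locality, I would, for each fixed $S\subseteq E_\lambda(G_1)$, combine Propositions~\ref{prop:local} and \ref{prop:c1d1} with Theorem~\ref{thm:bigCD} to identify the contracting sets $\C$ of $G_1\otimes_\lambda G_2$ with $\{f:(\C_f,\D_f)\text{ has type zero}\}=S$ with the data of a contracting set $\C_1$ of $G_{1S}$ with respect to $\H_1\cup S$ together with, for every $\lambda$-colored edge $f$, a choice of $(\C_f,\D_f)$ of type $\mathscr{C}$, $\mathscr{D}$ or zero according as $f\in\C_1$, $f\in\D_1$ or $f\in S$. Locality then makes the weight factor: $W(G_1\otimes_\lambda G_2,c,\phi,\C)$ is the product of the non-$\lambda$ weights occurring in $W(G_{1S},c,\phi_1,\C_1)$ times, for each $\lambda$-colored $f$, the product of the weights of the regular edges of $G_2^f$; and, up to a sequence of vertex pivots (using \cite[Lemma~3.14 and Section~4]{DHRel}), the graph of zero edges $\H_\C$ is obtained from the graph of zero edges $(\H_1\cup S)_{\C_1}$ of $G_{1S}$ by splicing on, at the vertex to which the endpoints of $e_f$ collapse, the graph of zero edges of each type $\mathscr{C}$ or $\mathscr{D}$ copy $G_2^f$, and by replacing each $\lambda_0$-colored edge of $(\H_1\cup S)_{\C_1}$ (which records an $f\in S$) with the graph of zero edges of the corresponding type zero copy, grafted at the endpoints of that $\lambda_0$-edge via a $2$-sum.

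The remaining task is to recognise the outcome as $\Phi=\beta_0\circ\sigma\circ\beta_\lambda$ applied to $T^{\Lambda_0\cup\{\lambda_0\}}_{\H_1\cup S}(G_{1S})$. Summing the product of the weights of the regular edges of $G_2^f$ against the $z$-variable of its graph of zero edges over the admissible pairs $(\C_f,\D_f)$ produces, by Definition~\ref{def:CL0}, Definition~\ref{def:T/} and Lemmas~\ref{l:TC}--\ref{l:T0}, exactly $T_L(G_2,e)$ or $T_-(G_2,e)$ when $f\in\C_1$ is internally inactive resp.\ active, $T_C(G_2,e)$ or $T_/(G_2,e)$ when $f\in\D_1$ is externally inactive resp.\ active, and a term $p_jz_{[\Gamma_j]}$ of $T_0(G_2,e)$ when $f\in S$. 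Substituting these polynomials for the $\lambda$-weights of $W(G_{1S},c,\phi_1,\C_1)z_{[(\H_1\cup S)_{\C_1}]}$ is $\beta_\lambda$; splicing together the resulting local zero graphs with $z_{[(\H_1\cup S)_{\C_1}]}$ is $\sigma$; and the $2$-sum substitution at the $\lambda_0$-colored edges is $\beta_0$. Summing first over $\C_1$ and then over $S$ yields $\sum_{S\subseteq E_\lambda(G_1)}\Phi(T^{\Lambda_0\cup\{\lambda_0\}}_{\H_1\cup S}(G_{1S}))$, as claimed.

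I expect the main obstacle to be precisely the reconciliation of these substitution maps with the actual combinatorics of the copies, especially the appearance of $T_-(G_2,e)$ and $T_/(G_2,e)$ in place of $T^{\Lambda_0}_{\H_2}(G_2-e)$ and $T^{\Lambda_0}_{\H_2}(G_2/e)$. As Remark~\ref{r4.12} warns, these genuinely differ once zero edges are present: inside a copy $G_2^f$ of the tensor product the edge $e_f$ is physically absent (``removed first''), whereas in the polynomials attached to $G_2$ it survives as a zero edge (``removed last''); the correction terms $\pi_-T_0(G_2,e)$ and $\pi_/T_0(G_2,e)$ built into $T_-$ and $T_/$ strip off exactly the type-zero contributions that are instead supplied, with the ``removed last'' weights, by $\beta_0$ in the summands indexed by the recolouring sets $S$ that contain the relevant $f$. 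Checking that this inclusion--exclusion over all $S$ --- which may produce negative intermediate terms of the kind exhibited in Figure~\ref{T_example} and Remark~\ref{r4.12}, cancelling only after the full summation --- reproduces the left-hand side is the delicate point with no analogue in the zero-edge-free theorems of \cite{DGH1,DHCl}.
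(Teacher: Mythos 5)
Your plan follows the same route as the paper's proof: the same block labeling of $G_1\otimes_\lambda G_2$, the same encoding of a contracting set by a subset $S\subseteq E_\lambda(G_1)$, a contracting set $\C_1$ of $G_{1S}$, and type-constrained pairs $(\C_f,\D_f)$ in the copies (Propositions~\ref{prop:local} and \ref{prop:c1d1}, Theorem~\ref{thm:bigCD}), together with the reading of $\beta_\lambda$, $\sigma$ and $\beta_0$ as substitution, splicing and $2$-sum. The problem is that the one place where you commit to a precise quantitative claim is false as stated, and the step that would repair it is exactly the step you defer. In your third paragraph you assert that, for fixed $\C_1$, summing the actual weights of the copy $G_2^f$ over the admissible pairs $(\C_f,\D_f)$ yields $T_-(G_2,e)$ when $f\in\C_1$ is internally active and $T_/(G_2,e)$ when $f\in\D_1$ is externally active. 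It does not: when $f$ is bridge-like the admissible pairs are exactly the type $\mathscr{C}$ ones and, since $e_f$ is absent from the tensor product and the endpoints of $e_f$ are not connected outside the copy, their total is the type-$\mathscr{C}$ part of $T^{\Lambda_0}_{\H_2}(G_2-e)$ (``$e$ removed first'' weights). This differs from $T_-(G_2,e)=T^{\Lambda_0}_{\H_2}(G_2-e)-\pi_-T_0(G_2,e)$ by the type-zero discrepancy exhibited in Remark~\ref{r4.12}; likewise, when $f$ is bridge- or loop-like and $f\in S$, the actual weights of the type-zero copies are not the $T_0(G_2,e)$ weights that $\beta_0$ supplies. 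The desired identity is not a per-copy identity at all; it can only hold after the two alternatives ($f\in\C_1$ versus $f\in S$, respectively $f\in\D_1$ versus $f\in S$) are added.

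That addition is the heart of the paper's argument (its Cases 2 and 3), and it is precisely what your final paragraph labels ``the delicate point'' without carrying out. Concretely: when $f$ becomes a bridge after the higher-labeled contractions and deletions, the true combined contribution of the copy over both alternatives is $T^{\Lambda_0}_{\H_2}(G_2-e)$, suitably attached; writing this as $T_-(G_2,e)+\pi_-T_0(G_2,e)$ (Definition~\ref{def:T/}), one must show that the $\pi_-T_0(G_2,e)$ piece, attached by splicing, agrees with the $\beta_0$-image of the corresponding $\lambda_0$-colored edge, attached by $2$-sum. This uses that in this situation the endpoints of $e_f$ are cut points of the terminal graph, so the spliced and $2$-summed zero graphs are vertex pivot equivalent and the totals coincide even though individual ``$e$ first'' and ``$e$ last'' terms do not match; the loop-like case is symmetric, with $T_/(G_2,e)$, $\pi_/T_0$, and identified endpoints. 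Without this argument, and without a proof of your ``locality lemma'' in the generic case (one must check, via Definition~\ref{acti_alt_def} and the block structure of the labeling, that an external connection of the endpoints of $e_f$ reproduces exactly the activities computed with $e$ as a zero edge, which is what justifies $x_\lambda\mapsto T_L$, $y_\lambda\mapsto T_C$ and the $T_0$ terms of Lemmas~\ref{l:TC}--\ref{l:T0}), the proposal remains a plan for the proof rather than a proof of Theorem~\ref{TensorFormula}.
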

\begin{proof}
We generalize the proofs of \cite[Theorem 5.1]{DGH1} and
\cite[Theorem 3]{DHCl}, using the description of the contracting and
deleting sets of $G_1\otimes_\lambda G_2$ given in Section~\ref{sec:cs}.
The polynomial $T_\H^{\lambda_0}(G_1\otimes_\lambda G_2)$ is the total
weight of all contracting sets $\C$ of the graph $G_1\otimes_\lambda
G_2$, calculated using any proper labeling of the edges of $G_1\otimes_\lambda
G_2$ with respect to $\H$. Let us select our proper labeling in three
steps as follows:
\begin{itemize}
\item[(i)] Label all edges in $\H$ with zero.
\item[(ii)] Label all regular edges (including the $\lambda$-colored edges) of
  $G_1$ with pairwise distinct positive integers that are multiples of
  $|E(G_2)|$.
\item[(iii)] If the label of a $\lambda$-colored edge $f\in E_{\lambda}(G_1)$
  is $k\cdot |E(G_2)|$ for
  some $k>0$ then number the regular edges in the copy of $G_2$
  replacing $f$ using the elements of the set $\{(k-1)|E(G_2)|+1,
  (k-1)|E(G_2)|+2,\ldots, k|E(G_2)|\}$.
\end{itemize}
We obtain a proper labeling of $G_1\otimes_\lambda G_2$ with respect to $\H$
that has the following property: if we list the regular edges of of
$G_1\otimes_\lambda G_2$ in increasing order of labels, the regular
edges belonging to a copy of $G_2$ associated to the same
$f\in E_{\lambda}(G_1)$ form a sublist of consecutive elements.
By the description given in Section~\ref{sec:cs}, there is a one to one correspondence between the contracting sets $\C$ of $G_1\otimes_\lambda G_2$ and the contracting sets generated by the following three-step
procedure:
\begin{itemize}
\item[(1a)] Select a subset $S$ of $E_{\lambda}(G_1)$ and define
  $\widehat{\H_1}:=\H_1\cup S$.
\item[(1b)] Select a contracting/deleting set pair $\C_1$, $\D_1$ of $G_1$ with respect to
  $\widehat{\H_1}$.
\item[(2)] For each copy $G_2^f$ of $G_2$, associated to an edge $f\in
  E_{\lambda} (G_1)$, partition the set of regular edges
  $E(G_2^f)\setminus (\H_f\cup \{f\})$ of the copy into a contracting set
  $\C_f$ and a deleting set $\D_f$ such that $(\C_f,\D_f)$ has type $\mathscr{C}$
  (or type $\mathscr{D}$, or type zero, respectively) exactly when $f\in\C_1$
  (or $f\in \D_1$, or $f\in \widehat{\H_1}$, respectively).
\end{itemize}
We then define the  contracting set $\C$ as the union of the sets $\C_f$ and
of $\C_1\setminus E_{\lambda}(G_1)$. The corresponding deleting set is the
union of the sets $\D_f$ and of $\D_1\setminus E_{\lambda}(G_1)$. Note
that there is no other restriction on the choices made in the three
steps of the above procedure than the ones stated. If we group the
weights of the contracting sets $\C$ according to the choices in the
above procedure,  the choice made in step (1a) corresponds to summing over
all subsets $S$ of $E_{\lambda}(G_1)$. After fixing $S$, summing over
all possible choices $\C_1$ in step (1b) calls for summing over the same
contracting sets that are used to compute the relative
Tutte polynomial $T^{\Lambda_0\cup\{\lambda_0\}}_{\H_1\cup
  S}(G_{1S})$. For a fixed contracting set $\C_1$ of
$G_{1S}$ with respect to $\widehat{\H_1}=\H_1\cup S$, step (2)
calls for summing over all contracting sets $\C_f$ of the same type,
where the type depends on $f$ belonging to $\C_1$, $\D_1$ or
$\widehat{\H_1}$. A key observation in understanding the rest of the proof below
is that we may replace steps (1a) and (1b) above with the following
step.
\begin{itemize}
\item[(1)] In decreasing order of their labels, put each $f\in
  E_{\lambda}(G_1)$ into $\C_1$, $\D_1$ or $S$, subject to the following
  restrictions: an edge $f\in E_{\lambda}(G_1)$ cannot be put into
  $\C_1$ if it becomes a loop in $G_1$ after contracting all higher
  labeled edges of $\C_1$, and it cannot be put into $\D_1$ if it
  becomes a bridge in $G_1$ after deleting all higher labeled edges of $\D_1$.
\end{itemize}
The restrictions are necessary and sufficient to guarantee that the
resulting $\C_1$ is a contracting set and the resulting $\D_1$ is the
corresponding deleting set with respect to $\widehat{\H_1}=\H_1\cup S$.
Furthermore, as noted in the alternative
Definition~\ref{acti_alt_def}, the external or internal activity of an
edge in $\C_1$ or $\D_1$ is determined by whether the edge in
question becomes a loop or bridge after contracting all higher labeled
edges of $\C_1$ and deleting all higher labeled edges of $\D_1$.
Since the labeling on the edges of $G_1\otimes_{\lambda} G_2$ is
obtained by replacing each $\lambda$-colored edge $f$ by a consecutive run
of edges of $G_2^f$, and due to the special dependence of the pair $(\C,\D)$
on the pair $(\C_1,\D_1)$, the above observation regarding activities may be
extended to $G_1\otimes_{\lambda} G_2$ in the following way. Consider
any $f\in E_\lambda(G_1)$. Contract all edges of $\C$ and delete all
edges of $\D$ whose label is higher than the label of any edge in the
copy $G_2^f$ of $G_2$. After performing these operations, the endpoints
of $f$ get identified if and only if $f$  becomes a loop after
contracting all higher labeled edges of $\C_1$ in $G_1$. Similarly, the endpoints of $f$ are only
connected by paths in the copy $G_2^f$  if and
only if $f$ becomes a bridge after deleting all higher labeled edges of
$\D_1$ in $G_1$. The first observation is true because contracting any $\lambda$-colored edge
 $g\in\C_1$ in $G_1$ (with higher label) identifies the endpoints of $g$
in $G_1$ and the same effect is achieved by contracting all edges in $\C_g$ in
$G_1\otimes_\lambda G_2$ as  $(\C_g,\D_g)$ has type $\mathscr{C}$.
Similarly, deleting any $\lambda$-colored edge
$g\in\D_1$ in $G_1$ (with higher label) removes the possibility of going from one endpoint
of $g$ to the other endpoint, without visiting any other vertex of
$G_1$, and the same effect is achieved by deleting all edges in $\D_g$ in
$G_1\otimes_\lambda G_2$, as $(\C_g,\D_g)$ has type $\mathscr{D}$.

We now consider three cases depending on whether an edge $f\in
E_{\lambda}(G_1)$ becomes a loop, a bridge, or neither after
contracting all higher labeled edges of $\C_1$ and deleting all higher
labeled edges of $\D_1$ in step (1) above.

{\bf\noindent Case 1}. $f\in E_{\lambda}(G_1)$ becomes neither a loop nor
a bridge after contracting all higher labeled edges of $\C_1$ and
deleting all higher labeled edges of $\D_1$. In this case, $f$ may be
put into either of $\C_1$, $\D_1$, or $S$. Furthermore, if we use
the contraction/deletion formula (\ref{recur1}) (in the decreasing order of
the labels of the edges) to compute $T(G_1\otimes_\lambda G_2)$ then,
after contracting all edges in $\C$ and deleting all edges in $\D$ whose
label is higher than the label of the edges in $G_2^f$, the endpoints of
$f$ are still distinct, and there is a path outside the copy $G_2^f$ of
$G_2$ connecting the endpoints of $f$.

{\em Subcase 1(a).} $f\in \C_1$. Since $f$ is inactive, it contributes a
term $x_\lambda$ to $T^{\Lambda_0\cup\{\lambda_0\}}_{\H_1\cup  S}(G_{1S})$.
The total weight of all type $\mathscr{C}$ contracting sets of $G_2^f$
is precisely $T_L(G_2,e)$ (Lemma \ref{l:TL}).

{\em Subcase 1(b).} $f\in \D_1$. Since $f$ is inactive, it contributes a term $y_\lambda$ to $T^{\Lambda_0\cup\{\lambda_0\}}_{\H_1\cup
  S}(G_{1S})$. The total weight of all type $\mathscr{D}$ contracting
sets of $G_2^f$ is precisely $T_C(G_2,e)$ (Lemma \ref{l:TC}).

{\em Subcase 1(c).} $f\in S$.  and is ``inactive" in the sense that if we
follow the contraction/deletion formula (in the decreasing order of the
labels of the edges) The total weight of of all type zero contracting
sets of $G_2^f$ is precisely $T_0(G_2,e)$ (Lemma \ref{l:T0}).

Notice that in all three cases above, replacing $f$ by any terminal
graph resulted from a contracting set of the corresponding type (through
the splicing or the 2-sum operations  defined in the mappings
$\beta_\lambda$, $\sigma$ and $\beta_{0}$) will not affect the
activities of the remaining edges. Thus, 1(a) and 1(b) prove the
validity of the substitutions $x_\lambda\to T_L(G_2,e)$ and
$y_\lambda\to T_C(G_2,e)$, while 1(c) shows the validity of replacing a
$\lambda_0$ colored edge in the terminal graph of  $G_{1S}$ by a copy of
$T_0(G_2,e)$.

{\bf\noindent Case 2}. $f\in E_{\lambda}(G_1)$ becomes
a bridge after contracting all higher labeled edges of $\C_1$ and
deleting all higher labeled edges of $\D_1$. In this case, $f$ may be
put into either of $\C_1$ or $S$. Furthermore, if we use
the contraction/deletion formula (\ref{recur1}) (in the decreasing order of
the labels of the edges) to compute $T(G_1\otimes_\lambda G_2)$ then,
after contracting all edges in $\C$ and deleting all edges in $\D$ whose
label is higher than the label of the edges in $G_2^f$, the endpoints of
$f$ are still distinct, but there is no path outside the copy $G_2^f$ of
$G_2$ connecting the endpoints of $f$. If we choose to put $f$ into
$\C_1$, it becomes internally active in $G_{1S}$ thus contributing a
term $X_\lambda$ to $T^{\Lambda_0\cup\{\lambda_0\}}_{\H_1\cup  S}(G_{1S})$.

In this case, the combined
contribution of all type $\mathscr{C}$ and type zero contracting sets of
$G_2^f$ is
$T_\H^{\Lambda_0}(G_2^f-e_f)=T_\H^{\Lambda_0}(G_2^f-e_f)-\pi_-
T_0(G_2^f,e_f)+\pi_- T_0(G_2^f,e_f)=T_-(G_2,e)+\pi_- T_0(G_2,e)$. Since
we need to replace $X_\lambda$ by a polynomial that is label
independent, we will simply substitute $T_-(G_2,e)$ into $X_\lambda$. On
the other hand, if $f\in S$, then we will still replace the
corresponding $\lambda_0$ colored edge in the terminal graph of $G_{1S}$
by a copy of $T_0(G_2^f,e_f)$. In this particular case, the terminal
graphs obtained by using the type zero contracting sets in $\pi_-
T_0(G_2^f,e_f)$ through splicing and using the corresponding type zero
contracting sets in $T_0(G_2^f,e_f)$ through 2-sum operation are in fact
vertex pivot equivalent (since the end points of $f$ are cut vertices)
and their total contributions are the same. Thus the combined
contribution of  $T_-(G_2,e)$ (for $f\in \C_1$) and $T_0(G_2,0)$ (for
$f\in S$) is equal to $T_\H^{\Lambda_0}(G_2^f-e_f)$, which is the
correct contribution of $G_2^f$ in this case. The key point of this
substitution rule is that the $\lambda_0$ colored edges are now treated
equally in terms of substitution and we have found a right substitution
for $X_\lambda$ that is label independent.

{\bf\noindent Case 3}. $f\in E_{\lambda}(G_1)$ becomes
a loop after contracting all higher labeled edges of $\C_1$ and
deleting all higher labeled edges of $\D_1$. In this case, $f$ may be
put into either of $\D_1$ or $S$. Furthermore, if we use
the contraction/deletion formula (\ref{recur1}) (in the decreasing order of
the labels of the edges) to compute $T(G_1\otimes_\lambda G_2)$ then,
after contracting all edges in $\C$ and deleting all edges in $\D$ whose
label is higher than the label of the edges in $G_2^f$, the endpoints of
$f$ become identical. If we choose to put $f$ into
$\C_1$, it becomes externally active in $G_{1S}$ thus contributing a
term $Y_\lambda$ to $T^{\Lambda_0\cup\{\lambda_0\}}_{\H_1\cup
  S}(G_{1S})$.

In this case the combined contribution of all type $\mathscr{D}$ and
type zero contracting sets of $G_2^f$ is
$T_\H^{\Lambda_0}(G_2^f/e_f)=T_\H^{\Lambda_0}(G_2^f/e_f)-\pi_/T_0(G_2^f,e_f)+\pi_/
T_0(G_2^f,e_f)=T_/(G_2,e)+\pi_/T_0(G_2,e)$. As we did in 2(a), we will
simply substitute $Y_\lambda$ by $T_/(G_2,e)$. On the other hand, if
$f\in S$, then we will still substitute the corresponding $\lambda_0$
colored edge in the terminal graph of $G_{1S}$ by a copy of
$T_0(G_2^f,e_f)$. Again, the terminal graphs obtained by using the type
zero contracting sets in $\pi_/T_0(G_2^f,e_f)$ through splicing and
using the corresponding type zero contracting sets in $T_0(G_2^f,e_f)$
through 2-sum operation are also vertex pivot equivalent (the end points
of $f$ are identified and is also a cut vertex in $G_1\otimes_\lambda
G_2$) and their total contributions are the same. Thus this substitution
rule is also valid.
\end{proof}

\section{Examples and ending remarks}\label{conc}

Let us end this paper by a couple of examples and remarks.

\begin{remark}{\em
In the case that $G_1$ contains zero edges but $G_2$ does not, there are no type zero contracting sets in $G_2$ and we have $T_-(G_2,e)=T(G_2-e)$,
$T_/(G_2,e)=T(G_2/e)$. In this case the substitution rule obtained in this paper is the same as the one given \cite{DHCl}. That is, the main result in \cite{DHCl} can be extended to the relative Tutte polynomial of $G_1\otimes_\lambda G_2$ without having to modifying the definitions of the pointed Tutte polynomials of $G_2$ and the substitution formula.}
\end{remark}

\begin{remark}{\em
Another extreme (and trivial) example is when $G_2$ consists of only two edges which are not loop edges: one is the special edge $e$ and the other a zero edge. In this case any graph $G_1$ with zero edges can be obtained by color the zero edges by $\lambda$ and then take the tensor product  $G_1\otimes_\lambda G_2$. In this case all pointed Tutte polynomials are zero except $T_0(G_2,e)$. Consequently, the only non-trivial substitution (as expected) happens only when the set $S$ contains all $\lambda$-colored edges. }
\end{remark}

Next, let us use a relatively simple example to illustrate the application of Theorem \ref{TensorFormula}.
Figure~\ref{G1G2} shows the graphs $G_1$ and $G_2$, as well as their
corresponding tensor product $G_1\otimes_\lambda G_2$. We will assume
that the regular edges in $G_2$ are labeled in such a way that the top
edge has the highest label and the bottom edge has the lowest label, see
the numbers $1$ through $3$ in Figure~\ref{G1G2}.

\vskip -.1in
\begin{figure}[htbp]
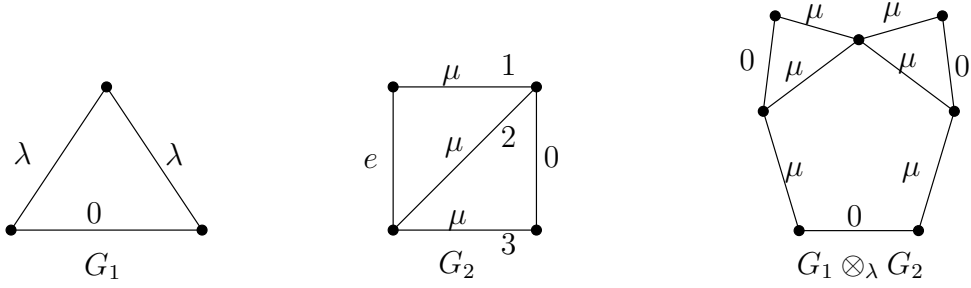

\begin{center}
\input{g1g2.pspdftex}\hskip 1.0in \input{g1timesg2.pspdftex}
\caption{The graphs $G_1$, $G_2$ and $G_1\otimes_\lambda G_2$. }\label{G1G2}
\end{center}
\end{figure}

For the two regular edges in $G_1$ that are $\lambda$-colored, there are three cases: none of them is in $\widehat{\H_1}$, one of them is in $\widehat{\H_1}$ (and there are two symmetric cases here) and both are in $\widehat{\H_1}$. Using Corollary \ref{Cor2.7}, we get (the details are left to our reader)
\begin{equation}\label{TG1}
\sum_{S\subseteq E_{\lambda}(G_1)}
T^{\Lambda_0\cup\{\lambda_0\}}_{\H_1\cup
  S}(G_{1S})=x_\lambda^2 z{[\includegraphics[scale=.25]{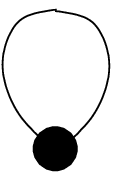}]}+y_\lambda(x_\lambda + X_\lambda)z{[\includegraphics[scale=.25]{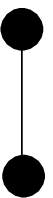}]}
  +2x_\lambda z{[\includegraphics[scale=.25]{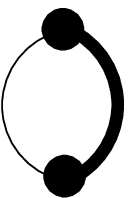}]}+2y_\lambda z{[\includegraphics[scale=.2]{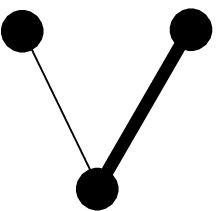}]}+z{[\includegraphics[scale=.2]{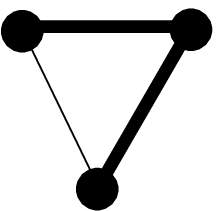}]},
\end{equation}
where the thicker edges in the graphs are of color $\lambda_0$ and the rest are zero edges. Next we will compute the pointed relative Tutte polynomials
$T_C(G_2,e)$, $T_L(G_2,e)$, $T_0(G_2,e)$, $T_/(G_2,e)$ and
$T_-(G_2,e)$. To help our reader, we list all possible contracting sets and their contributions in Table~\ref{tab:contrib}. In the case of a type zero contracting set, the edge $e$ in the terminal graph is marked by a thickened and dashed line.
\begin{table}[h]
\begin{center}
\begin{tabular}{|c|c|c|}
\hline
$\C$ & Type of $\C$ & Contributions\\
\hline
$\{1\}$ & $\mathscr{D}$ &
$X_{\mu} y_{\mu}^2z{[\includegraphics[scale=.25]{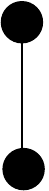}]}$ to $T_C$, $T(G_2/e)$\\
$\{2\}$ & $\mathscr{D}$ &$x_{\mu} y_{\mu}^2z{[\includegraphics[scale=.25]{Gamma12}]}$ to $T_C$, $x_{\mu} y_{\mu} Y_{\mu}z{[\includegraphics[scale=.25]{Gamma12}]}$ to $T(G_2/e)$\\
$\{3\}$ & $\mathscr{D}$ &$x_{\mu} y_{\mu}^2z{[\includegraphics[scale=.25]{Gamma12}]}$ to $T_C$, $T(G_2/e)$\\
$\{1,2\}$ & $\mathscr{C}$ &
$x_{\mu}^2 y_{\mu}z{[\includegraphics[scale=.25]{Gamma12}]}$ to $T_L$, $X_{\mu}^2 y_{\mu}z{[\includegraphics[scale=.25]{Gamma12}]}$ to $T(G_2-e)$\\
$\{1,3\}$ & zero & $x_{\mu}^2
y_{\mu}z{[\includegraphics[scale=.25]{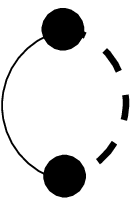}]}$ to $T_0$, $x_{\mu}^2
y_{\mu}z{[\includegraphics[scale=.25]{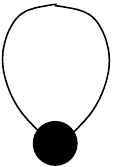}]}$ to $T(G_2/e)$, $x_{\mu}
y_{\mu}X_\mu z{[\includegraphics[scale=.25]{Gamma12}]}$ to $T(G_2-e)$\\
$\{2, 3\}$ & $\mathscr{D}$
& $x_{\mu}^2 y_{\mu}z{[\includegraphics[scale=.25]{Gamma11}]}$ to $T_C$, $x_{\mu}^2 Y_{\mu}z{[\includegraphics[scale=.25]{Gamma11}]}$ to $T(G_2/e)$\\
$\{1,2,3\}$ & $\mathscr{C}$
& $x_{\mu}^3z{[\includegraphics[scale=.25]{Gamma11}]}$ to $T_L$, $X_{\mu}x_{\mu}^2z{[\includegraphics[scale=.25]{Gamma11}]}$ to $T(G_2-e)$\\
\hline
\end{tabular}
\vspace{0.1in}
\caption{Contributions of the contracting sets to the pointed relative
  Tutte polynomials associated to  $G_2$.}\label{tab:contrib}
\end{center}
\end{table}
Summing up the weights listed in the table, we obtain
\begin{eqnarray*}
T(G_2-e)&=& x_\mu^2 X_\mu z{[\includegraphics[scale=.25]{Gamma11}]}+
y_\mu X_\mu(x_\mu+X_\mu)z{[\includegraphics[scale=.25]{Gamma12}]},\\
T(G_2/e)&=& x_\mu^2 (Y_\mu+y_\mu) z{[\includegraphics[scale=.25]{Gamma11}]}+ y_\mu(x_{\mu}y_{\mu}+x_\mu Y_\mu+ y_\mu X_\mu) z{[\includegraphics[scale=.25]{Gamma12}]},\\
T_0(G_2,e)&=&x_\mu^2y_\mu z{[\includegraphics[scale=.25]{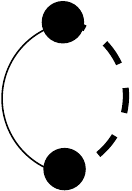}]}
\end{eqnarray*}
and
\begin{eqnarray*}
T_C(G_2,e)&=& x_\mu^2 y_\mu z{[\includegraphics[scale=.25]{Gamma11}]}+
y_\mu^2(2x_\mu+X_\mu)z{[\includegraphics[scale=.25]{Gamma12}]}\\
T_/(G_2,e)&=& T(G_2/e)-\pi_/T_0(G_2,e)\\
&=& x_\mu^2 Y_\mu z{[\includegraphics[scale=.25]{Gamma11}]}+ (X_{\mu}y_{\mu}^2+x_\mu
y_\mu Y_\mu+ x_\mu y_\mu^2) z{[\includegraphics[scale=.25]{Gamma12}]},\\
T_L(G_2,e)&=&x_\mu^3 z{[\includegraphics[scale=.25]{Gamma11}]}+ x_\mu^2 y_\mu z{[\includegraphics[scale=.25]{Gamma12}]},\\
T_-(G_2,e)&=& T(G_2-e)-\pi_-T_0(G_2,e)\\
&=&x_\mu^2 X_\mu  z{[\includegraphics[scale=.25]{Gamma11}]}+ (x_\mu X_\mu+X_\mu^2-x_\mu^2) y_\mu
z{[\includegraphics[scale=.25]{Gamma12}]}.
\end{eqnarray*}
We can now apply the mapping $\Phi=\beta_{0}\circ
\sigma\circ\beta_{\lambda}$ to $\sum_{S\subseteq E_{\lambda}(G_1)}
T^{\Lambda_0\cup\{\lambda_0\}}_{\H_1\cup
  S}(G_{1S})$ in an almost term by term fashion. The final answer
contains 7 different vertex pivot equivalent classes of graphs (with
only zero edges). For example,
to compute $\Phi(2x_\lambda z{[\includegraphics[scale=.25]{Gamma23}]})$, we
would replace $x_\lambda$ by $T_L(G_2,e)$, apply the regular splicing
map $\sigma$, and perform a two sum operation on the resulting graph
with $\Gamma_2^3$. This leads to
$$\Phi(2x_\lambda z{[\includegraphics[scale=.25]{Gamma23}]})= 2x_\mu^5 y_\mu  z{[\includegraphics[scale=.18]{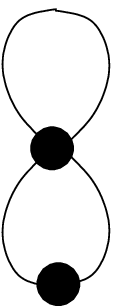}]}+2x_\mu^4
y_\mu^2 z{[\includegraphics[scale=.18]{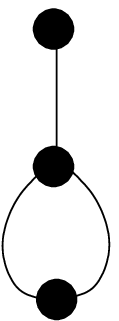}]}.$$
Similarly,
\begin{eqnarray*}
\Phi(x_\lambda^2 z{[\includegraphics[scale=.25]{Gamma11}]}) &=& x_{\mu}^6 z{[\includegraphics[scale=.18]{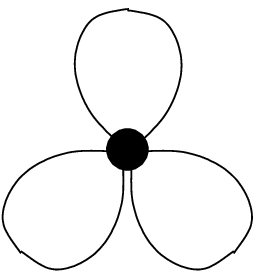}]} + 2 x_{\mu}^5
y_{\mu} z{[\includegraphics[scale=.18]{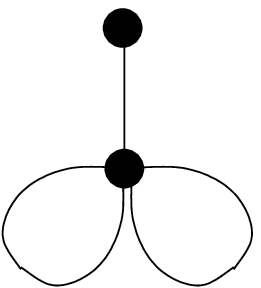}]} + x_{\mu}^4 y_{\mu}^2 z{[\includegraphics[scale=.18]{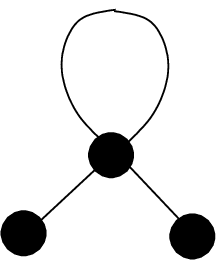}]},\\
\Phi(y_\lambda(x_\lambda + X_\lambda)z{[\includegraphics[scale=.25]{Gamma12}]})&=&
x_{\mu}^4 y_{\mu} (x_{\mu}+X_{\mu}) z{[\includegraphics[scale=.18]{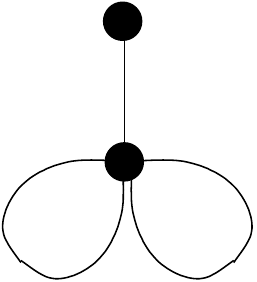}]}
+ x_{\mu}^2 y_{\mu}^2 (2 x_{\mu}^2 + 4 X_{\mu}x_{\mu}+2 X_{\mu}^2) z{[\includegraphics[scale=.18]{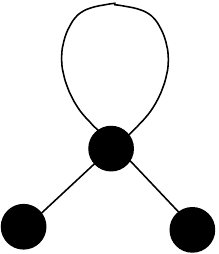}]} \\
&&+X_\mu y_{\mu}^3 (2x_{\mu}^2+3x_\mu X_\mu+X_{\mu}^2) z{[\includegraphics[scale=.18]{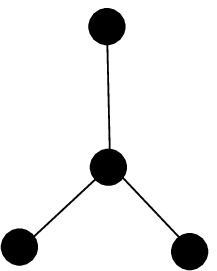}]},\\
\Phi(2y_\lambda z{[\includegraphics[scale=.2]{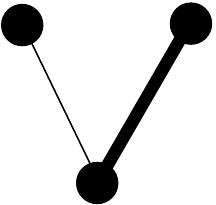}]}) &=& 2 x_{\mu}^4 y_{\mu}^2 z{[\includegraphics[scale=.18]{H3}]} +
2 x_{\mu}^2y_{\mu}^3 (2x_{\mu}+X_{\mu}) z{[\includegraphics[scale=.18]{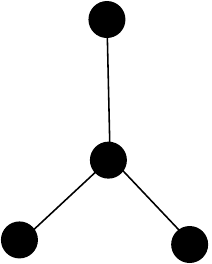}]}, \\
\Phi(z{[\includegraphics[scale=.2]{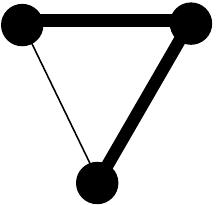}]}) &=& x_{\mu}^4 y_{\mu}^2 z{[\includegraphics[scale=.18]{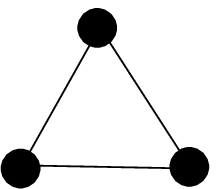}]}.\\
\end{eqnarray*}
We leave the verification of the details to our reader.  Summing up the
previous equations yields
\begin{eqnarray*}
T_\H^\Lambda(G_1\otimes_\lambda G_2)
&=&
x_\mu^6 z{[\includegraphics[scale=.18]{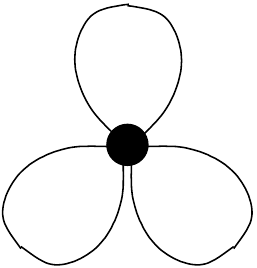}]}+x_\mu^4 y_\mu(3x_\mu+ X_\mu)z{[\includegraphics[scale=.18]{H2}]}+x_\mu^2 y_\mu^2(5x_\mu^2+4x_\mu X_\mu +2X_\mu^2)z{[\includegraphics[scale=.18]{H3}]}\\
&+&
y_\mu^3(4x_\mu^3+4x_\mu^2 X_\mu+3x_\mu X^2_\mu+X_\mu^3)z{[\includegraphics[scale=.18]{H4}]}+2x_\mu^5 y_\mu z{[\includegraphics[scale=.18]{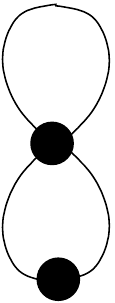}]}
+2x^4_\mu y_\mu^2 z{[\includegraphics[scale=.18]{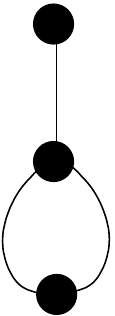}]}+x^4_\mu y_\mu^2 z{[\includegraphics[scale=.18]{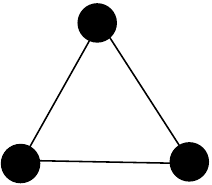}]}.
\end{eqnarray*}
As an exercise, we encourage our reader to verify this result by direct
contraction/deletion computation using the recursive formula
(\ref{recur1}), keeping in mind Definition~\ref{acti_alt_def}.

We end our paper with the following remark. In \cite{DHRel} we showed that the relative Tutte polynomial can be used to compute the Jones polynomial of a virtual knot, with a formulation very similar to the original work of Kauffman \cite{K2}. In the case of classical knot theory, our generalized formulation of the Tutte polynomial for a tensor product of colored graphs \cite{DGH1,DHCl} enables one to derive a fast computation of the Jones polynomial of a knot obtained through repeated tangle replacement operations \cite{DEZ}. Thus, the implication of our main result in virtual knot theory is that similar approaches are also possible for virtual knots obtained through repeated tangle replacement operations, so long as the tangle replacement does not occur at a virtual crossing (which corresponds to a zero edge in our graphs). A precise formulation and detailed analysis is, however, much more involved and is beyond the scope of this paper and shall be addressed by the authors in a future work.

\smallskip
\section*{Acknowledgement}
This work is supported in part by NSF Grants \#DMS-0920880 and \#DMS-1016460 to Y Diao.

\end{document}